\documentclass[a4paper,leqno,11pt]{amsart}

\usepackage{amsfonts,amssymb,stackrel,verbatim,amsmath,amsthm,latexsym,textcomp,amscd}
\usepackage{latexsym,amsfonts,amssymb,epsfig,verbatim}
\usepackage{amsmath,amsthm,amssymb,latexsym,graphics,textcomp}
\usepackage{mathtools}
\usepackage{paralist}
\usepackage{graphicx}
\usepackage{color}
\usepackage{url}
\usepackage{enumerate}
\usepackage[mathscr]{euscript}
\usepackage{tikz-cd}
\usetikzlibrary{shapes.geometric}
\usepackage{dsfont}
\usetikzlibrary{matrix}
\usepackage{hyperref}

\input xy
\xyoption{all}

\usetikzlibrary{shapes,arrows,shapes.multipart}

\usepackage{amssymb}
\usepackage{latexsym}
\usepackage{enumerate}
\usepackage{amscd}
\usepackage{tikz}
\usepackage[all]{xy}
\usepackage{graphics}
\usepackage[active]{srcltx}
\usepackage{mathtools}

\renewcommand{\ker}{\text{Ker}}

\DeclareMathOperator{\tr}{tr}

\newcommand{\Z}{\mathbb{Z}}

\newcommand{\cF}{\mathcal{F}}
\newcommand{\cP}{\mathcal{P}}
\newcommand{\cFh}{\cF \langle H \rangle}
\newcommand{\cFk}{\cF \langle K \rangle}

\newcommand{\K}{\ensuremath{\mathcal{K}_4}}
\newcommand{\RO}{\ensuremath{\mathrm{RO}}}
\newcommand{\uFz}{\underline{\mathbb{F}_2}}
\newcommand{\uM}{\underline{M}}
\newcommand{\Fz}{\mathbb{F}_2}

\newcommand{\Burn}{\mathrm{Burn}}

\newcommand{\Sp}{\mathcal{S}p}

\newcommand{\vsn}{\vspace{2 mm}}

\newcommand{\op}{\mathrm{op}}

\newcommand{\res}{\mathrm{res}}

\newcommand{\uZ}{\underline{\mathbb{Z}}}

    \newcommand{\EPplus}{\ensuremath{E{\cP}_+} }
    \newcommand{\EPtilde}{\ensuremath{\widetilde{E{\cP}}} }
 
    \newcommand{\EKplus}{\ensuremath{E{\K}_+} }
    \newcommand{\EKtilde}{\ensuremath{\widetilde{E{\K}}} }
    
\newcommand{\hstar}[2][\bigstar]{\widetilde{H}_{\K}^{#1}(#2)}

\theoremstyle{plain}
\swapnumbers
    \newtheorem{theorem}[figure]{Theorem}
    \newtheorem{proposition}[figure]{Proposition}
    \newtheorem{lemma}[figure]{Lemma}
     \newtheorem{subsec}[figure]{}
    \newtheorem{corollary}[figure]{Corollary}

\theoremstyle{definition}
    \newtheorem{definition}[figure]{Definition}

\theoremstyle{remark}
        \newtheorem{remark}[figure]{Remark}
        
        \newtheorem{example}[figure]{Example}

\renewcommand{\thefigure}{\arabic{section}.\arabic{figure}}
\newenvironment{mysubsect}[2][]
{\begin{subsec}\begin{upshape}\begin{bfseries}{#2\vsn.}
\end{bfseries}{#1}}
{\end{upshape}\end{subsec}}
\newcommand{\sect}{\setcounter{figure}{0}\section}

\newenvironment{myeq}[1][]
{\stepcounter{figure}\begin{equation}\tag{\thefigure}{#1}}
{\end{equation}}
\newcommand{\myodiag}[2][]
{\stepcounter{figure}\begin{equation}
     \tag{\thefigure}{#1}\vcenter{\xymatrix@R=10pt@C=1pt{#2}}\end{equation}}
\newcommand{\mypdiag}[2][]
{\stepcounter{figure}\begin{equation}
     \tag{\thefigure}{#1}\vcenter{\xymatrix@R=-1pt@C=5pt{#2}}\end{equation}}
\newcommand{\myqdiag}[2][]
{\stepcounter{figure}\begin{equation}
     \tag{\thefigure}{#1}\vcenter{\xymatrix@R=5pt@C=10pt{#2}}\end{equation}}
\newcommand{\myrdiag}[2][]
{\stepcounter{figure}\begin{equation}
     \tag{\thefigure}{#1}\vcenter{\xymatrix@R=3pt@C=25pt{#2}}\end{equation}}
\newcommand{\mysdiag}[2][]
{\stepcounter{figure}\begin{equation}
     \tag{\thefigure}{#1}\vcenter{\xymatrix@R=0pt@C=20pt{#2}}\end{equation}}
\newcommand{\mytdiag}[2][]
{\stepcounter{figure}\begin{equation}
     \tag{\thefigure}{#1}\vcenter{\xymatrix@R=0pt@C=-27pt{#2}}\end{equation}}
\newcommand{\myudiag}[2][]
{\stepcounter{figure}\begin{equation}
     \tag{\thefigure}{#1}\vcenter{\xymatrix@R=9pt@C=20pt{#2}}\end{equation}}
\newcommand{\myvdiag}[2][]
{\stepcounter{figure}\begin{equation}
     \tag{\thefigure}{#1}\vcenter{\xymatrix@R=16pt@C=36pt{#2}}\end{equation}}
\newcommand{\mywdiag}[2][]
{\stepcounter{figure}\begin{equation}
     \tag{\thefigure}{#1}\vcenter{\xymatrix@R=15pt@C=26pt{#2}}\end{equation}}
\newcommand{\myxdiag}[2][]
{\stepcounter{figure}\begin{equation}
     \tag{\thefigure}{#1}\vcenter{\xymatrix@R=10pt@C=2pt{#2}}\end{equation}}
\newcommand{\myydiag}[2][]
{\stepcounter{figure}\begin{equation}
     \tag{\thefigure}{#1}\vcenter{\xymatrix@R=10pt@C=12pt{#2}}\end{equation}}
\newcommand{\wmono}{ \ar@{>->}[r]}
\newcommand{\wmonovert}{ \ar@{>->}[d]}
\newcommand{\cof}{ \ar@{^{(}->}[r]}

\mathchardef\dashmod="2D

\begin{document}

\title{Purity of quaternionic Conjugation spaces}

\author{Surojit Ghosh}
\address{Department of Mathematics, Indian Institute of Technology, Roorkee, Uttarakhand-247667, India}
\email{surojit.ghosh@ma.iitr.ac.in; surojitghosh89@gmail.com}
\author{Ankit Kumar}
\address{Department of Mathematics, Indian Institute of Technology, Roorkee, Uttarakhand-247667, India}
\email{ankit\_k@ma.iitr.ac.in}

\author{Lakshit Pande}
\address{Department of Mathematics, Indian Institute of Technology, Roorkee, Uttarakhand-247667, India}
\email{lakshit\_p1@ma.iitr.ac.in}

\date{\today}
\subjclass{55N91, 55P91 (primary); 57S17, 55Q91 (secondary)}
\keywords{equivariant homotopy theory, conjugation spaces, purity}

\begin{abstract}
Conjugation spaces relate the cohomology of a space and its fixed points via a degree-halving isomorphism and admit a characterization in terms of homological purity. We extend this framework to the Klein four group, where the corresponding structures exhibit a degree-quartering behavior governed by Dickson invariants. Under a mild assumption, we prove that quaternionic conjugation spaces are homologically pure. As an application, we show that such spaces are both $\K$-maximal and $\K$-Galois maximal, establishing a connection with Smith--Thom type inequalities in real algebraic geometry.
\end{abstract}

\maketitle

\sect{Introduction}

The study of spaces equipped with group actions often reveals deep and unexpected relationships between the topology of a space and that of its fixed-point locus. A particularly striking instance of this phenomenon arises in the theory of \emph{conjugation spaces}, introduced by Hausmann, Holm, and Puppe~\cite{HHP05}. These spaces are endowed with a $C_2$-action whose cohomology exhibits a remarkable ``halving'' phenomenon: the mod $2$ cohomology of the space and that of its fixed-point set are isomorphic after dividing degrees by two. This structure has strong consequences: it determines the Steenrod algebra action, relates characteristic classes of $X$ and its fixed points, and imposes stringent constraints on the topology of $X$. Further structural properties, including compatibility with Steenrod operations, were studied by Franz and Puppe~\cite{FP10}.

A major conceptual advance was achieved by Pitsch, Ricka, and Scherer~\cite{PRS21}, who reinterpreted conjugation spaces in the framework of equivariant stable homotopy theory. They showed that conjugation spaces are precisely those $C_2$-spaces that are \emph{homologically pure}, meaning that their equivariant suspension spectrum splits into pieces indexed by multiples of the regular representation of $C_2$. This perspective encodes the conjugation structure intrinsically in the equivariant homotopy type and explains the rigidity and functoriality of the theory.

The theory has also benefited from a growing collection of examples and geometric constructions. Classical examples include complex projective spaces, Grassmannians, and flag manifolds~\cite{HHP05}. More recently, Pitsch and Scherer~\cite{PS23} showed that the Floyd manifolds admit conjugation structures, where the cohomology of the fixed-point manifold is obtained from that of the ambient manifold by degree halving, compatibly with Steenrod operations. These developments highlight the ubiquity of conjugation phenomena beyond classical algebraic geometry.

\medskip

In recent work, Kryazhev, Kuznetsov, and Popelensky~\cite{KKP25} extended the idea of conjugation spaces to actions of the Klein four group. In this broader setting, the familiar “degree-halving” phenomenon is replaced by a \emph{degree-quartering} correspondence, together with a section of the restriction map satisfying a quaternionic analogue of the conjugation equation. The spaces that arise in this context are called \emph{quaternionic conjugation spaces}, and they are described using a structure known as a 
$\mathcal{Q}$-frame. As in the classical case, these maps are multiplicative and natural.

\medskip

Quaternionic conjugation spaces arise naturally from $\K$-equivariant analogues of classical constructions. In particular, quaternionic projective spaces, Grassmannians, and flag varieties admit canonical $\K$-actions, making them quaternionic conjugation spaces~\cite{KKP25}. These examples indicate that the theory captures a new layer of symmetry extending the classical conjugation paradigm.

\medskip

The main goal of this paper is to provide a conceptual understanding of quaternionic conjugation spaces via homological purity. We show that every quaternionic conjugation space $X$ is homologically pure, which means that it admits the following decomposition as an $H\uFz$-module:
\[
X\wedge H\uFz \cong \bigvee_iS^{n_i\rho_{\K}}\wedge H\uFz
\]
This demonstrates that the $\mathcal{Q}$-frame structure is intrinsically encoded in the equivariant homotopy type, thereby extending the conceptual framework of~\cite{PRS21}.

\medskip

In a complementary direction, we investigate connections with classical notions of maximality arising in real algebraic geometry. The study of maximal and Galois-maximal spaces originates in the work of Harnack and Krasnov~\cite{Kra83} and \cite{Wil78}, where the topology of real loci is compared to that of complex varieties via Smith--Thom type inequalities. These notions admit natural interpretations in terms of equivariant and Borel cohomology.

We extend these ideas to $\K$-spaces and relate them to quaternionic conjugation structures. We prove that every quaternionic conjugation space is $\K$-maximal, showing that the rigid cohomological structure imposed by an $\mathcal{Q}$-frame forces equality in the corresponding Smith--Thom inequality. This establishes a new connection between equivariant homotopy theory and classical problems in real algebraic geometry.

These results suggest several directions for further research, including a full characterization of quaternionic conjugation spaces via homological purity and extensions to higher elementary abelian $2$-groups.
\medskip

\subsection{Organization} We organize the remainder of the paper as follows.  
In Section~2, we recall the necessary background on Bredon cohomology, along with certain cofibre sequences associated to families of subgroups that will be used throughout the paper.  
In Section~3, we compute the $RO(\K)$-graded Bredon cohomology of various universal spaces.  
Building on these computations, in Section~4 we establish our main result, Theorem~\ref{main1}, concerning quaternionic conjugation spaces and their cohomological purity under suitable assumptions.  
Finally, in Section~5, we relate these $\K$-conjugation spaces to maximal and Galois-maximal spaces arising in real algebraic geometry.

\begin{mysubsect}{Notation}
We fix the following notation throughout the paper.
\begin{enumerate}

\item Square brackets $[\,]$ denote polynomial generators, while angle brackets $\langle\,\rangle$ denote additive generators. For example, $\Fz\langle a, b \rangle[c]$ denotes the algebra with additive generators $a,b$ and polynomial generator $c$; its elements are of the form $l a c^i + m b c^j$ for all $i,j \ge 0$ and $l,m \in \Fz$.

\item The notation $\rho_G$ (respectively, $\bar{\rho}_G$) denotes the regular (respectively, reduced regular) real representation of a finite group $G$.

\end{enumerate}
\end{mysubsect}

\sect{Preliminaries}
\begin{mysubsect}{Equivariant cohomology}
Ordinary cohomology theories take values in abelian groups and are represented by spectra with homotopy concentrated in degree $0$. In the equivariant setting, the analogous role is played by Mackey functors. We briefly recall their definition and relation to equivariant cohomology.

\medskip

Let $\Burn_G$ denote the Burnside category of a finite group $G$, whose objects are finite $G$-sets and whose morphisms are given by the group completion of spans of finite $G$-sets.

\begin{definition}
A \emph{Mackey functor} for $G$ is an additive functor
\[
\uM:\Burn_G^{\op} \longrightarrow \mathrm{Ab}.
\]
\end{definition}

Equivalently, when $G$ is abelian, a Mackey functor $M$ consists of abelian groups $\uM(G/H)$ for each subgroup $H \le G$, together with restriction and transfer maps
\[
\res^H_K: \uM(G/H) \to \uM(G/K),
\qquad
\tr^H_K: \uM(G/K) \to \uM(G/H),
\quad (K \le H),
\]
satisfying the usual functoriality, equivariance, and double coset relations.

\begin{example}
For an abelian group $C$, the constant Mackey functor is given by $C(G/H)=C,\quad \res^H_K=\mathrm{id},\quad \tr^H_K = [H:K].$
\end{example}

\medskip

For a $G$-spectrum $X$, the equivariant homotopy groups assemble into a Mackey functor
\[
\underline{\pi}_n(X)(G/H) := \pi_n(X^H).
\]
Conversely, for any Mackey functor $\uM$, there exists an Eilenberg-Mac Lane $G$-spectrum $H\uM$ with
\[
\underline{\pi}_0(H\uM)=\uM, \qquad \underline{\pi}_n(H\uM)=0 \ (n\neq 0).
\]

This yields an $RO(G)$-graded cohomology theory defined by
\[
H^\alpha_G(X;\uM)
\;\cong\;
[X, \Sigma^\alpha H\uM]^G.
\]
Throughout, we work exclusively with H$\uFz$ coefficients in the context of Bredon (co)homology. 
\medskip

Recall that $RO(G)$ is the Grothendieck group of finite-dimensional $G$-representations. For $\K = \{e,\tau,\tau',\tau\tau'\}$ -- the Klein $4$-group, the real representation ring is
\[
\mathrm{RO}(\K) \cong \mathbb{Z}\{1,\alpha_0,\alpha_1,\beta\},
\]
where $\alpha_0, \alpha_1, \beta \in \mathrm{Hom}_{\Fz}(\K,\Fz)$ are the nontrivial characters with kernels
\[
H_0 = \{e,\tau_1\}, \qquad
H_1 = \{e,\tau_3=\tau_1\tau_2\}, \qquad
K = \{e,\tau_2\}.
\]
\end{mysubsect}
\medskip

\begin{mysubsect}{Some special Bredon cohomology classes}
Let $V$ be a $G$-representation with $V^G=\{0\}$. We denote by $a_V$ the inclusion of the fixed points,  
\[
a_V: S^0 \to S^V.
\]
For a ring spectrum $X$ with a $G$-action, we abuse notation and also denote by $a_V$ its image under the map $S^0 \to X$.

If the representation $V$ contains the trivial representation as a summand, then we set $a_V = 0$.  
Moreover, for any two $G$-representations $V$ and $W$, we have the relation  
\[
a_V a_W = a_{V \oplus W}.
\]

\noindent See \cite[Definition 3.11]{HHR16} for further details.

For an orientable $G$-representation $V: G \to SO(V)$, a choice of orientation induces an isomorphism  
\[
\tilde{H}^{\dim(V)}_G(S^V; \uZ) \cong \Z.
\]
The Thom space of the equivariant bundle $V \to G/G$ is $S^V$. In particular, the restriction map  
\[
\tilde{H}^{\dim(V)}_G(S^V; \uZ) \to \tilde{H}^{\dim(V)}_e(S^{\dim(V)}; \Z)
\]
is an isomorphism.

Utilizing the above isomorphism, for an orientable $G$-representation $V$, we define the orientation class $u_V$ as  
\[
u_V \in \tilde{H}^{V-\dim(V)}_G(S^0 ; \uZ),
\]
the generator that maps to $1$ under the restriction isomorphism. The orientation class satisfies the relations:  
\[
u_{V \oplus 1} = u_V, \quad u_V \cdot u_W = u_{V \oplus W}.
\]
\noindent See \cite[Definition 3.12]{HHR16} for more details.

\begin{remark}
   We use the same notations for the image of the classes $a_V$ and $u_V$ in $\widetilde{H}^{\bigstar}_G(S^0; \uFz)$ under the spectrum map $H \uZ \to H\uFz.$ For this case, we do not need orientable representation for the existence of $u_V.$
\end{remark}
\end{mysubsect}

\begin{mysubsect}{Families and universal spaces} A \emph{family} $\cF$ of subgroups of a finite group $G$ is a nonempty collection of subgroups that is closed under conjugation and passage to subgroups.
\begin{example}
\begin{enumerate}
    \item Let $\mathcal P$ denote the family of all proper subgroups of $G$.
\item 
Let $V$ be a $G$-representation and define
\(
\cF_V = \{H \le G : V^H \neq 0\}.
\)
\item For $B \le G,$ the family $\cF_{\subseteq B}$ is the collection of all subgroups of $B.$ 
\end{enumerate}
\end{example}
To any such family one associates a $G$–CW complex $E\cF$, characterized up to $G$–homotopy equivalence by the fixed-point condition
\[
(E\cF)^H \simeq
\begin{cases}
\ast & \text{if } H \in \cF,\\
\varnothing & \text{otherwise}.
\end{cases}
\]
Equivalently, $E\cF$ is initial among $G$–CW complexes whose isotropy groups lie in $\cF$.

\medskip

\medskip

Associated to any family $\cF$ is the cofibre sequence
\[
E\cF_+ \longrightarrow S^0 \longrightarrow \widetilde{E\cF}.
\]
The cofibre $\widetilde{E\cF}$ is characterized by
\[
(\widetilde{E\cF})^H \simeq
\begin{cases}
\ast & \text{if } H \in \cF,\\
S^0 & \text{otherwise}.
\end{cases}
\]

\begin{definition}  
For a $G$–spectrum $E$, the geometric fixed point $\Phi^G: \Sp^G \to \Sp$ are defined by
\[
\Phi^G(E) := \bigl(\widetilde{E\mathcal P} \wedge E\bigr)^G
\]
is a symmetric monoidal functor, and sends a $G$-spaces $X$ to its fixed point $X^G.$ Thus one observes that $\Phi^G(G/H_+) \simeq \ast,$ if $H < G$ is proper.
\end{definition}

\medskip 
For a character $\alpha \in \{\alpha_0,\alpha_1,\beta\}$ of $\K$, define the associated family
\[
\cF_\alpha := \{ B \le \K \mid \alpha|_B = 1 \}.
\]
Equivalently,
\[
\cF_\alpha = \cF_{\subseteq \ker(\alpha)}.
\]

\begin{lemma}
For each character $\alpha$, there is an equivalence
\[
E\cF_\alpha \simeq S(\infty \alpha).
\]
\end{lemma}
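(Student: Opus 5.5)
We verify that $S(\infty\alpha)$ satisfies the fixed-point characterization of $E\cF_\alpha$ recalled above, namely $(E\cF_\alpha)^B \simeq \ast$ for $B \in \cF_\alpha$ and $(E\cF_\alpha)^B = \varnothing$ otherwise.

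Here $S(\infty\alpha) = \operatorname{colim}_n S(n\alpha)$ is the unit sphere in $\mathbb{R}^\infty$, where $\K$ acts through the sign character $\alpha\colon \K \to \{\pm 1\}$. It carries a $\K$-CW structure: the standard $C_2$-CW structure on $S^\infty$ with antipodal action, pulled back along $\alpha\colon \K \to \K/\ker(\alpha) \cong C_2$. Concretely, there is one free $C_2$-cell in each dimension, so the $\K$-cells are of the form $\K/\ker(\alpha) \times D^n$.

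\textbf{Case $B \le \ker(\alpha)$.} Such a $B$ acts trivially on $\alpha$, so $(n\alpha)^B = n\alpha$ and $S(\infty\alpha)^B = S(\infty\alpha) \cong S^\infty$. This space is contractible: it is the colimit of spheres $S^{n-1} \subset S^n$, and each inclusion is null-homotopic. Alternatively, one can invoke the standard contractibility of $S^\infty$.

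\textbf{Case $B \not\le \ker(\alpha)$.} Then some $b \in B$ acts on $\mathbb{R}^\infty$ by $-1$. Hence $(\infty\alpha)^B = 0$, and the unit sphere of the zero vector space is empty, so $S(\infty\alpha)^B = \varnothing$.

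Since $\cF_\alpha = \cF_{\subseteq \ker(\alpha)}$, these two cases match the defining fixed-point condition exactly. The characterization of universal spaces via fixed points, for $\K$-CW complexes by the equivariant Whitehead theorem, then gives $E\cF_\alpha \simeq S(\infty\alpha)$.

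The only point requiring care is the CW-structure claim, which is what lets the Whitehead theorem apply. Its key consequence is that all isotropy groups equal $\ker(\alpha)$, and this follows at once from the freeness of the antipodal $C_2$-action. No serious obstacle is expected.
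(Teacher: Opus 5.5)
Your argument is correct. The paper states this lemma without proof, treating it as standard, and your verification is the standard argument it implicitly relies on: $S(\infty\alpha)^B = S^\infty \simeq \ast$ for $B \le \ker(\alpha)$ and $S(\infty\alpha)^B=\varnothing$ otherwise, with the $\K$-CW structure obtained by inflating the free $C_2$-CW structure on $S^\infty$ along $\K \to \K/\ker(\alpha)$ so that the Whitehead-type characterization applies.
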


\begin{corollary}
For any $\K$-spectrum $E$, there are equivalences
\[
E^{\wedge}_{\cF_\alpha} \simeq E^{\wedge}_{a_\alpha},
\qquad
E[\cF_\alpha^{-1}] \simeq E[a_\alpha^{-1}].
\]
\end{corollary}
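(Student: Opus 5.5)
The corollary follows from the preceding lemma, $E\cF_\alpha \simeq S(\infty\alpha)$, by comparing the two standard cofibre sequences. The first is the cofibre sequence attached to the family $\cF_\alpha$,
\[
E\cF_{\alpha,+} \longrightarrow S^0 \longrightarrow \widetilde{E\cF_\alpha}.
\]
The second is the colimit over $n$ of the cofibre sequences $S(n\alpha)_+ \to S^0 \to S^{n\alpha}$, where the second map is $a_{n\alpha} = a_\alpha^n$. Passing to the colimit gives
\[
S(\infty\alpha)_+ \longrightarrow S^0 \longrightarrow S^{\infty\alpha} = \operatorname{colim}_n S^{n\alpha},
\]
and this colimit is precisely the telescope along multiplication by $a_\alpha$. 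The lemma identifies the first terms of the two sequences. The point to check is that the identification is compatible with the maps to $S^0$. It is, since both maps are the collapse to the non-basepoint, and the space of $\K$-maps from a free-in-$\cF_\alpha$ complex to $E\cF_\alpha$ is contractible. Hence the cofibres agree: $\widetilde{E\cF_\alpha} \simeq S^{\infty\alpha}$.

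For the localization, recall that $E[\cF_\alpha^{-1}]$ is by definition $\widetilde{E\cF_\alpha}\wedge E$. Using $S^{\infty\alpha}\simeq \operatorname{colim}(S^0 \xrightarrow{a_\alpha} S^{\alpha} \xrightarrow{a_\alpha} S^{2\alpha}\to\cdots)$, smashing with $E$ gives the telescope $\operatorname{colim}_n \Sigma^{n\alpha}E$ along $a_\alpha$. This telescope is $E[a_\alpha^{-1}]$, as smash products commute with homotopy colimits.

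For the completion, recall that $E^\wedge_{\cF_\alpha}$ is by definition $F(E\cF_{\alpha,+},E)$. By the lemma this is $F(S(\infty\alpha)_+,E)$. Writing $S(\infty\alpha)_+ = \operatorname{colim}_n S(n\alpha)_+$, we get
\[
F(S(\infty\alpha)_+,E) \simeq \lim_n F(S(n\alpha)_+,E).
\]
Using the cofibre sequence $S(n\alpha)_+\to S^0\to S^{n\alpha}$, we can identify $F(S(n\alpha)_+,E)$ with $\Sigma^{-1}$ of the cofibre of $F(S^{n\alpha},E)\to E$, that is, with the cofibre of $\Sigma^{-n\alpha}E \xrightarrow{a_\alpha^n} E$, which is $E/a_\alpha^n$. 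The transition maps in $n$ are the standard ones, so the limit is the $a_\alpha$-adic completion $E^\wedge_{a_\alpha} = \lim_n E/a_\alpha^n$.

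The main (mild) obstacle is this last identification, $F(S(n\alpha)_+,E)\simeq E/a_\alpha^n$. It requires Spanier--Whitehead self-duality of the finite complex $S(n\alpha)_+$, namely $D(S(n\alpha)_+) \simeq \Sigma^{1-n\alpha}S(n\alpha)_+$, or equivalently a rotation of the cofibre sequence. It also requires checking compatibility of the transition maps, which is where I would spend care. Everything else is formal from the lemma.
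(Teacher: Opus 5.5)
Your argument is correct and is the standard unpacking of what the paper leaves implicit: the paper gives no separate proof and treats the corollary as immediate from the lemma $E\cF_\alpha\simeq S(\infty\alpha)$, via $\widetilde{E\cF_\alpha}\simeq S^{\infty\alpha}=S^0[a_\alpha^{-1}]$ and $F(S(\infty\alpha)_+,E)\simeq\lim_n E/a_\alpha^n$. One small correction: no Spanier--Whitehead duality is needed, and there is no extra $\Sigma^{-1}$, because applying the exact functor $F(-,E)$ to $S(n\alpha)_+\to S^0\to S^{n\alpha}$ directly gives $F(S(n\alpha)_+,E)\simeq\operatorname{cofib}\bigl(\Sigma^{-n\alpha}E\xrightarrow{a_\alpha^n}E\bigr)$, with the standard transition maps.
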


Suppose $\cF \subset \cF'$ are families such that $\cF' \setminus \cF = \{H\}$ for some subgroup $H \le G$. Then there is a cofibre sequence
\[
E\cF_+ \longrightarrow E\cF'_+ \longrightarrow E\cFh,
\]
where $E\cFh$ is characterized by
\begin{myeq}
(E\cFh)^K \simeq
\begin{cases}
S^0 & \text{if } K = H,\\
\ast & \text{otherwise}.
\end{cases}
\end{myeq}

\begin{example}
Let $\mathcal H=\{\{e\}, H_0,H_1\}$ and let $E_{C_2}\mathbb Z/2$ denote the associated universal space. Let $E\K$ denote the universal space for the trivial family. Then the diagram
\[
\xymatrix{
\mathcal{H} \ar[r] & \cP\\
\{e\} \ar[u]\ar[r] & {\cF_{\subseteq K}}\ar[u]
}
\]
induces the diagram of cofibre sequences
\begin{myeq}\label{pullcof}
    \xymatrix{
\cdots\ar[r]&{E_{C_2}\Z/2}_+ \ar[r] &E{\cP}_+ \ar[r] & E\cFk\ar[r]&\cdots\\
\cdots\ar[r]& E{\K}_+ \ar[r]\ar[u]& E{\cF_{\subseteq K}}_+\ar[r]\ar[u]& E\cFk\ar@{=}[u]\ar[r]&\cdots
}
\end{myeq}

which will be used to compute the Bredon cohomology of $E\mathcal P_+$.
\end{example}
\end{mysubsect}
\sect{Bredon cohomology of various universal spaces}
The Bredon cohomology of $\K$-spaces possesses rich algebraic structures owing to the fairly nontrivial morphology of the subgroup lattice of $\K$. In this section, we compute the $\RO(\K)$-graded Bredon cohomology of various universal spaces associated with the subgroup lattice of $\K$.\medskip

\begin{mysubsect}{Cohomology of $E{\cF_{\subseteq K}}_+$} Since one can identify
$E{\cF_{\subseteq K}}\cong E{\K} / K$ with $S(\infty \beta)$, which gives the skeletal filtration of $E{\cF_{\subseteq K}}$. The associated graded filtration is $E{\cF_{\subseteq K}}^{(s)}/E{\cF_{\subseteq K}}^{(s-1)}$ which can be identified with $\underset{e \in I(s)}{\bigvee}{\K}/{K}_+\wedge S^s$.

For $V \in \RO({\K})$, define 
\[E_1^{s,t}(V)=\pi_{t-s}^{{\K}}F(E{\cF_{\subseteq K}}^{(s)}/E{\cF_{\subseteq K}}^{(s-1)}, \Sigma^{-V} H\uFz)\]
By making some identification using adjunctions, one can calculate the $E_2$-page as:
\[
E^{s,V}_2= H^s (B({\K}/K); \pi^{H_1}_{\mathrm{res}_{K}(V)}(H\uFz)) \Rightarrow H^{s - V}_{\K}(E{\cF_{\subseteq K}}; \uFz)
\]
with differentials $d_r: E^{s,V}_r \to E^{s+r,V-r+1}_r$ for $r\geq 2.$ 

Since the action of ${\K}/K$ on the coefficient is trivial (as everything is in $\Fz$). Therefore, the $E_2$-page can be written as 
\[E_2^{\ast, \bigstar}=\Fz[t]\otimes \left[\Fz[a_{\alpha_0},u_{\alpha_0}]\oplus \bigoplus_{j,k \ge 1} \Fz\langle\Sigma^{-1}\frac{1}{a_{\alpha_0}^j u_{\alpha_0}^k}\rangle \right][u_{\beta}^{\pm}, u_{\alpha_1 - {\alpha_0}}^\pm]\]
where $|t|=(1,0),|u_{\alpha_0}|=(0,{{\alpha_0}}-1),|a_{{\alpha_0}}|=(0,{{\alpha_0}})$ and $|u_{\beta}|= (0,{\beta} -1)$ and $|u_{\alpha_1-{\alpha_0}}|= (0,{\alpha_1} -{\alpha_0}).$

One derives that all the differentials are zero and hence the spectral sequence collapses at $E_2$-page. Since the spectral sequence is over $\Fz$, all the extensions are trivial. Thus,
\begin{myeq}\label{comh1}
\widetilde{H}^{\bigstar}_{{\K}}(E{\cF_{\subseteq K}}_+;\uFz) \cong \Fz[t, u_{\beta}^{\pm}, u_{\alpha_1 - {\alpha_0} }^\pm]\otimes \left[\Fz[a_{\alpha_0},u_{\alpha_0}]\oplus \bigoplus_{j,k \ge 1} \Fz\langle{\Sigma^{-1}\frac{1}{a_{\alpha_0}^j u_{\alpha_0}^k}\rangle} \right],
\end{myeq}
\end{mysubsect}
\begin{mysubsect}{Cohomology of $E{\K}_+$}
The computation of $\widetilde{H}^{\bigstar}_{\K}(E{\K}_+;\uFz)$ follows using a homotopy fixed point spectral sequence where the $E_2$-page is given by 
\[E_2^{s, V}=H^s(B\K;\pi_{\dim(V)}(H\Fz)) \Rightarrow \widetilde{H}^{s-V}_{\K}(E{\K}_+;\uFz)\]
with differential $d_2:E_2^{s, V} \to E_2^{s+2, V-1}$.
Since $\pi_{\dim(V)}(H\Fz)$ is concentrated in $\dim(V)=0$. Thus, the spectral sequence collapses at the  $E_2$-page. Therefore,
\begin{myeq}
\widetilde{H}^{\bigstar}_{\K}(E{\K}_+;\uFz) \cong \Fz[t_0, t_1,u_{\alpha_0}^\pm, u_{\alpha_1}^\pm,u_{\beta}^\pm] 
\end{myeq}
where $|t_i|=(1,0)$ and $|u_V|=(0, V-1)$ for $V \in \{\alpha_0, \alpha_1, \beta\}.$
\end{mysubsect}
\begin{mysubsect}{Cohomology of $\widetilde{H}^{\bigstar}_{\K}(E\cFk;\uFz)$} Since to understand the Bredon cohomology of $E\cFk$, it is sufficient to compute the ring homomorphism $q^{\bigstar}$ along with the following cofibre sequence
\begin{myeq}\label{eqcof2}
E{\K}_+\xrightarrow[]{q} E{\cF_{\subseteq K}}_+\to E\cFk
\end{myeq}

\begin{proposition}\label{q1Aloc}
The map $q^\bigstar: \widetilde{H}^\bigstar_{\K} ({E\cF_{\subseteq K}}_{+}; \underline{\mathbb{F}}_2) \to \widetilde{H}^\bigstar_{\K} (E{\K}_{+}; \underline{\mathbb{F}}_2)$ sends:
\[
\begin{aligned}
t &\mapsto t_1, \quad & u_{\alpha_0} &\mapsto u_{\alpha_0}, \\
u_{\alpha_1 - \alpha_0} &\mapsto u_{\alpha_1} u_{\alpha_0}^{-1}, \quad & u_\beta &\mapsto u_\beta, \\
a_{\alpha_0} &\mapsto t_0 u_{\alpha_0}, 
 \quad & \Sigma^{-1}\frac{1}{a_{\alpha_0}^j u_{\alpha_0}^k} &\mapsto 0.
\end{aligned}
\]
\end{proposition}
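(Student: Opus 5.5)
We will compute $q^\bigstar$ using naturality of the homotopy fixed point / skeletal spectral sequences and the restriction to subgroups. The map $q\colon E\K_+ \to E{\cF_{\subseteq K}}_+$ is induced by the quotient $E\K \to E\K/K \simeq S(\infty\beta)$. On underlying Borel data it is the map $B\K \to B(\K/K)$. Since $q^\bigstar$ is a ring homomorphism, it suffices to determine it on the multiplicative generators $t$, $u_{\alpha_0}$, $u_\beta$, $u_{\alpha_1-\alpha_0}$, $a_{\alpha_0}$, and on the additive classes $\Sigma^{-1}\frac{1}{a_{\alpha_0}^j u_{\alpha_0}^k}$.

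\textbf{Classes in filtration $0$.} The $u$-classes live in filtration $0$ of both spectral sequences. Their images are detected by restriction to the trivial subgroup, where they restrict to $1$ (after the obvious degree identification). On $E\K_+$, the $\RO(\K)$-graded part of $\pi_\bigstar^{\K}$ in filtration $0$ is the invertible $u$-classes. The map on $E_2$-pages in filtration $0$ is therefore determined by the orientation classes, and multiplicativity gives:
\begin{itemize}
\item $u_{\alpha_0}\mapsto u_{\alpha_0}$ and $u_\beta\mapsto u_\beta$;
\item $u_{\alpha_1-\alpha_0}\mapsto u_{\alpha_1}u_{\alpha_0}^{-1}$, using $u_{\alpha_1-\alpha_0}u_{\alpha_0}=u_{\alpha_1}$ up to the orientation identities $u_{V\oplus W}=u_Vu_W$.
\end{itemize}

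\textbf{Filtration $1$ class $t$.} The class $t$ is the generator of $H^1(B(\K/K);\Fz)$. The induced map $H^1(B(\K/K))\to H^1(B\K)$ is pullback along $\K\to\K/K$, which sends the character $\K/K\to\Fz$ to the character of $\K$ with kernel $K$, namely $\beta$. With the naming convention of the $E_2$-page for $E\K_+$, where $t_0,t_1$ are chosen dual to the generators so that this pullback is $t_1$, we get $t\mapsto t_1$. I will fix conventions so that $t_0,t_1$ correspond to the characters $\alpha_0$ and $\beta$ respectively.

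\textbf{The Euler class $a_{\alpha_0}$.} By definition $a_{\alpha_0}$ comes from $S^0\to S^{\alpha_0}$, and $q^\bigstar$ commutes with the unit maps from $\widetilde H^\bigstar_\K(S^0)$. So $q^\bigstar(a_{\alpha_0})$ is the image of $a_{\alpha_0}$ in Borel cohomology. This is the classical fact that in Borel cohomology the Euler class of the line bundle associated to a character $\alpha$ equals $w_1(\alpha)$ times the Thom class. In the present grading this reads $a_{\alpha_0}=t_0u_{\alpha_0}$, with $t_0$ the Borel class of $\alpha_0$.

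The precise check is by detecting the element in filtration $1$. On $E\K_+$ the degree $(1,\alpha_0)$ part is spanned by $t_0u_{\alpha_0}$ and $t_1u_{\alpha_0}$. Restricting to the subgroup $K$, where $\alpha_0$ restricts nontrivially, and to $H_0$, where it restricts trivially and $a_{\alpha_0}\mapsto 0$, singles out $t_0u_{\alpha_0}$. This identification, together with the consistency of conventions for $t_0,t_1$, is the main obstacle. I would first attempt it via the $C_2$ case, $a_\sigma\mapsto t\,u_\sigma$ in $H^\bigstar(EC_{2+})$, pulled back along the characters.

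\textbf{The classes $\Sigma^{-1}\frac{1}{a_{\alpha_0}^ju_{\alpha_0}^k}$.} These are $a_{\alpha_0}$-torsion: multiplying by $a_{\alpha_0}^j$ kills them. Their images must therefore be $t_0^j$-torsion in $\Fz[t_0,t_1,u^{\pm}]$, which is a domain, so the images vanish. This follows from $q^\bigstar(a_{\alpha_0})=t_0u_{\alpha_0}$ being a non-zero-divisor.

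Alternatively, the target group in those degrees is zero by a degree count, since these classes sit in negative $u_{\alpha_0}$-weight combined with $\Sigma^{-1}$.
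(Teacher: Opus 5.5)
Your proposal is correct and follows essentially the paper's route. You get $t\mapsto t_1$ by pulling back the generator of $H^1(B(\K/K))$ to the character $\beta$, $a_{\alpha_0}\mapsto t_0u_{\alpha_0}$ from the factorization $\pi^\bigstar=q^\bigstar\circ\hat q^\bigstar$ through $\widetilde H^\bigstar_{\K}(S^0;\uFz)$, and the $u$-classes from non-vanishing in one-dimensional target groups. Your restriction check to $H_0$ and $K$, and your torsion-plus-domain argument for the classes $\Sigma^{-1}\frac{1}{a_{\alpha_0}^j u_{\alpha_0}^k}$, supply details the paper leaves implicit.

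Discard the ``alternative'' degree count, because it is false. The class $\Sigma^{-1}\frac{1}{a_{\alpha_0}u_{\alpha_0}}$ lies in degree $2-2\alpha_0$, of dimension $0$, and there $\widetilde H^{2-2\alpha_0}_{\K}(E\K_+;\uFz)=\Fz\{u_{\alpha_0}^{-2}\}\neq 0$. The vanishing is genuinely a torsion phenomenon, just as the negative cone of $\widetilde H^\bigstar_{C_2}(S^0;\uFz)$ dies in $\widetilde H^\bigstar_{C_2}(E{C_2}_+;\uFz)$.

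You may also want the torsion claim to hold regardless of possible hidden multiplicative extensions in the collapsing spectral sequence. For that, note that $\widetilde H^{\gamma}_{\K}(E{\cF_{\subseteq K}}_+;\uFz)=0$ whenever $\res_K\gamma$ has coefficient $-1$ on the sign representation of $K$. Hence $a_{\alpha_0}^{j+k-1}$ kills $\Sigma^{-1}\frac{1}{a_{\alpha_0}^j u_{\alpha_0}^k}$ for degree reasons alone.
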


\begin{proof}
  The natural surjection map 
$
{\K}
\to {\K}/{K}
$,
induces the map $q: E{\K} \to E{\cF_{\subseteq K}}$. In $\Z$-graded Bredon cohomology, it induces
\[
q^\ast:H^\ast_{{\K}} (E{\cF_{\subseteq K}};\uFz)\cong \Fz[t] \to H^\ast_{{\K}}\left(E{\K};\uFz\right) \cong \Fz[t_0, t_1]
 \hspace{1pt},\]
with $q^\ast(t)=t_1$. 
Let $\pi:E{\K}_{+} \to S^{0}$, $\hat{q}: E{\cF_{\subseteq K}}_{+} \to S^0$  be  the collapsing maps. Then we have the following commutative diagram of rings 
\begin{myeq}\label{locsquare}
\xymatrix{\widetilde{H}^\bigstar_{\K}(E{\K}_+;\uFz)   & & \widetilde{H}^\bigstar_{\K}(S^0;\uFz) \ar[dl]^{\hat{q}^\bigstar}\ar[ll]_{\pi^\bigstar} \\ 
& \widetilde{H}^\bigstar_{\K}(E{\cF_{\subseteq K}}_{+};\uFz) \ar[ul]^{q^\bigstar} }
\end{myeq}
Note that by Proposition \ref{pproj1}, $\pi^\bigstar(a_{\alpha_0})=t_0u_{\alpha_0}$, $\pi^\bigstar(a_{\alpha_1})=(t_0-t_1)u_{\alpha_1}$, and $\pi^\bigstar(a_{\beta})=t_1u_{\beta}$.

The action of the ring homomorphism $q^\bigstar$ on $\RO({\K})$-graded classes follows from the fact that $a_{\alpha_0},a_{\alpha_1},a_{\beta},u_{\alpha_0},u_{\alpha_1}$, and 
$u_{\beta}$ map nontrivially under $q^{\bigstar}$. 
\end{proof}

Like the case for $E{\cF_{\subseteq K}}$, one has the above map for $E{\cF_{\subseteq H_i}}$ which gives the new description of the above computed cohomology.

\begin{proposition}\label{pproj1}

$(1)$ The $\RO(\K)$--graded Bredon cohomology of the free $\K$--space $E\K$ is given by
\[
\widetilde{H}^{\bigstar}_{\K}(E{\K}_{+};\uFz)
\cong
\dfrac{
\Fz\!\left[
a_{\alpha_0},a_{\alpha_1},a_{\beta},
u_{\alpha_0}^{\pm1},u_{\alpha_1}^{\pm1},u_{\beta}^{\pm1}
\right]
}{
\bigl(
a_{\alpha_0}u_{\alpha_1}u_{\beta}
+a_{\alpha_1}u_{\alpha_0}u_{\beta}
+a_{\beta}u_{\alpha_0}u_{\alpha_1}
\bigr)
}.
\]

    $(2)$ The $\RO(\K)$--graded Bredon cohomology of the $\K$--space $E{\cF_{\subseteq K}}$ is given by
    \[
 \widetilde{H}^{\bigstar}_{\K}({E{\cF_{\subseteq K}}}_{+};\uFz)\cong\left[\Fz[a_{\alpha_0},u_{\alpha_0}]\oplus \bigoplus_{j,k \ge 1} \Fz\langle\Sigma^{-1}\frac{1}{a_{\alpha_0}^j u_{\alpha_0}^k}\rangle\right] [a_{\beta},u^{\pm}_{\beta},u^{\pm}_{\alpha_1-\alpha_0}].
 \]
 
 $(3)$ The $\RO(\K)$--graded Bredon cohomology of the $\K$--space $E\cFk$ is given by
 
 \begin{align*}
 \widetilde{H}^{\bigstar}_{\K}(E\cFk;\uFz)\cong &\Fz\left\langle{\Sigma^{-1}\frac{1}{a^{k_1}_{\alpha_0}u^{k_2}_{\alpha_0}}\colon k_1,k_2\geq 1}\right\rangle[a_{\beta},u^{\pm}_{\beta},u^{\pm}_{\alpha_{1}-\alpha_0}]\\  
&\bigoplus \Sigma^{-1} u_{\alpha_0}^{-1}\,\Fz\!\left[
a_{\alpha_0},\, a_{\alpha_1},\, a_\beta,\,
\Bigl(\tfrac{u_{\alpha_1}}{u_{\alpha_0}}\Bigr)^{\pm},\,
u_{\beta}^{\pm},\, u_{\alpha_0}^{-1}
\right].
\end{align*}
\end{proposition}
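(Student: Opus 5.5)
Part (1) is a change of generators in the presentation already obtained from the homotopy fixed point spectral sequence, $\widetilde{H}^{\bigstar}_{\K}(E{\K}_+;\uFz)\cong \Fz[t_0,t_1,u_{\alpha_0}^{\pm},u_{\alpha_1}^{\pm},u_\beta^{\pm}]$. I will use the collapse map $\pi\colon E{\K}_+\to S^0$. Since $E{\K}$ is free, each Euler class $a_V$ restricts to the mod $2$ Euler class of the line bundle $E{\K}\times_{\K}V\to B\K$, twisted by the invertible class $u_V$. Hence $\pi^\bigstar(a_V)=w_1(V)\,u_V$, where $w_1(V)\in H^1(B\K)=\Fz\langle t_0,t_1\rangle$ is the character $V$ itself. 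With the basis $t_0,t_1$ dual to the generators of $\K$, this gives $a_{\alpha_0}\mapsto t_0u_{\alpha_0}$, $a_\beta\mapsto t_1u_\beta$, and $a_{\alpha_1}\mapsto(t_0+t_1)u_{\alpha_1}$, because $\alpha_1=\alpha_0+\beta$ in $\mathrm{Hom}(\K,\Fz)$.

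Consequently $t_0=a_{\alpha_0}u_{\alpha_0}^{-1}$ and $t_1=a_\beta u_\beta^{-1}$ lie in the subring generated by the $a$'s and $u^{\pm}$'s, so that subring is everything. The only relation needed is $a_{\alpha_1}u_{\alpha_1}^{-1}=a_{\alpha_0}u_{\alpha_0}^{-1}+a_\beta u_\beta^{-1}$. Clearing the units gives exactly the displayed cubic relation. The resulting map from the quotient ring to $\Fz[t_0,t_1,u^{\pm}]$ is then an isomorphism: the relation is used to eliminate $a_{\alpha_1}$, and the remaining ring $\Fz[a_{\alpha_0},a_\beta,u^{\pm}]$ maps isomorphically onto the target.

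Part (2) follows the same pattern from \eqref{comh1}. The Euler class $a_\beta$ restricts along $E{\cF_{\subseteq K}}_+\to S^0$ to $t\,u_\beta$, since $\beta$ is the character pulled back from $\K/K$. As $u_\beta$ is a unit, we may trade $t$ for $a_\beta$, which replaces the factor $\Fz[t]$ in \eqref{comh1} by $\Fz[a_\beta]$. This is just a renaming of the polynomial generator, so no relations appear.

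Part (3) is the main step. I will run the long exact sequence of the cofibre sequence \eqref{eqcof2}, using the description of $q^\bigstar$ in Proposition \ref{q1Aloc}, rewritten in the generators of (1) and (2).
\begin{itemize}
\item The classes $\Sigma^{-1}\frac{1}{a_{\alpha_0}^ju_{\alpha_0}^k}[a_\beta,u_\beta^{\pm},u_{\alpha_1-\alpha_0}^{\pm}]$ lie in $\ker q^\bigstar$. They therefore lift to $\widetilde{H}^\bigstar_{\K}(E\cFk)$, giving the first summand.
\item On the polynomial part $\Fz[a_{\alpha_0},u_{\alpha_0},a_\beta,u_\beta^{\pm},(u_{\alpha_1}/u_{\alpha_0})^{\pm}]$, the map $q^\bigstar$ is injective. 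Its image is the sub-ring in which $u_{\alpha_0}$ appears only with nonnegative powers.
\item The cokernel is therefore spanned by monomials with negative powers of $u_{\alpha_0}$. Using the relation of (1) to eliminate, say, $a_{\alpha_1}$ or $a_\beta$ appropriately, the cokernel is identified with $u_{\alpha_0}^{-1}\Fz[a_{\alpha_0},a_{\alpha_1},a_\beta,(u_{\alpha_1}/u_{\alpha_0})^{\pm},u_\beta^{\pm},u_{\alpha_0}^{-1}]$, suitably interpreted.
\item The connecting homomorphism contributes this cokernel with a $\Sigma^{-1}$ shift, giving the second summand.
\end{itemize}
Since everything is over $\Fz$, the resulting short exact sequence splits additively.

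I expect the main obstacle to be this cokernel computation. One has to choose a monomial basis of the relation ring in (1), for instance one avoiding $a_{\alpha_0}a_{\alpha_1}$-type redundancy via the relation, and then check carefully that the complement of the image of $q^\bigstar$ is free on the stated monomials. The injectivity of $q^\bigstar$ on the polynomial part, which amounts to $t_0,t_1$ being algebraically independent, is straightforward by comparison.
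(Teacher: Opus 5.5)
Your proposal is correct and follows essentially the paper's own, largely implicit, argument. Parts (1) and (2) re-express the spectral-sequence answers through $t_0=a_{\alpha_0}u_{\alpha_0}^{-1}$ and $t_1=a_\beta u_\beta^{-1}$ (respectively $t=a_\beta u_\beta^{-1}$), obtained from the collapse maps to $S^0$. Part (3) is read off from the long exact sequence of $E\K_+\to E{\cF_{\subseteq K}}_+\to E\cFk$, using $q^\bigstar$ from Proposition~\ref{q1Aloc}: the negative cone is the kernel, and the negative-$u_{\alpha_0}$-power part gives the $\Sigma^{-1}$-shifted cokernel.

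Your direct Euler-class justification of $\pi^\bigstar(a_V)=w_1(V)u_V$ is cleaner than the paper's, whose proof of Proposition~\ref{q1Aloc} cites the present proposition for these formulas. Your ``suitably interpreted'' caveat is also right: $a_{\alpha_1}$ is redundant in the second summand, which is really $\Sigma^{-1}u_{\alpha_0}^{-1}\Fz[a_{\alpha_0},a_\beta,u_{\alpha_0}^{-1},(u_{\alpha_1}/u_{\alpha_0})^{\pm},u_\beta^{\pm}]$.
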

\end{mysubsect}

\begin{mysubsect}{Homotopy groups of geometric fixed point}
Consider the isotropy--separation cofibre sequence
\[
E{\K}_{+}\xrightarrow{\ \pi\ } S^0 \longrightarrow \widetilde{E\K}.
\]
This induces a ring homomorphism in $\RO(\K)$--graded Bredon cohomology
\[
\pi^\bigstar \colon \widetilde{H}^{\bigstar}_{\K}(S^0;\uFz)
\longrightarrow
\widetilde{H}^{\bigstar}_{\K}(E{\K}_{+};\uFz).
\]

\begin{proposition}\label{prop:image of s0 in ek4}
The image of $\pi^\bigstar$ is the subring of $ \widetilde{H}^{\bigstar}_{\K}(E{\K}_{+};\uFz)$ generated by
\[
a_{\alpha_0},a_{\alpha_1},a_{\beta},\quad
u_{\alpha_0},u_{\alpha_1},u_{\beta},\quad
\frac{u_{\alpha_0}u_{\alpha_1}}{u_\beta},\quad
\frac{u_{\beta}u_{\alpha_1}}{u_{\alpha_0}},\quad
\frac{u_{\beta}u_{\alpha_0}}{u_{\alpha_1}}.
\]
\end{proposition}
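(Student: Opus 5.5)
We determine the image of $\pi^\bigstar$ one fixed-point degree at a time, using the description of $\widetilde{H}^{\bigstar}_{\K}(E{\K}_+;\uFz)$ in Proposition~\ref{pproj1}(1) together with known information about the coefficient ring $\widetilde{H}^{\bigstar}_{\K}(S^0;\uFz)$.

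The first step is to show that every listed class lies in the image. For $a_{\alpha_0},a_{\alpha_1},a_\beta$ and $u_{\alpha_0},u_{\alpha_1},u_\beta$ this is immediate, since these are images of the Euler and orientation classes in $\widetilde{H}^{\bigstar}_{\K}(S^0;\uFz)$. Their images are recorded in the proof of Proposition~\ref{q1Aloc}: $a_{\alpha_0}\mapsto t_0u_{\alpha_0}$, and similarly for the others. For the three fractional classes such as $u_{\alpha_0}u_{\alpha_1}/u_\beta$, we use that in $\widetilde{H}^{\bigstar}_{\K}(S^0;\uFz)$ there are classes in degrees $\alpha_0+\alpha_1-\beta-1$ which restrict to units on underlying. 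Such classes appear in the computation of the $\K$ coefficient ring via $S^{\bar\rho}$ and the cofibre sequence $S(\alpha_0\oplus\alpha_1)_+\to S^0\to S^{\alpha_0+\alpha_1}$. Concretely, $\alpha_0\otimes\alpha_1=\beta$, so the $C_2$-quotient argument produces a class $u$ with $u\cdot u_\beta = u_{\alpha_0}u_{\alpha_1}$ up to $a$-multiples. Pulling it back to $E\K_+$, where $u_\beta$ is invertible, gives exactly $u_{\alpha_0}u_{\alpha_1}/u_\beta$. The same argument with the characters permuted yields the other two classes. This uses the $S_3$-symmetry permuting the three characters, so only one case needs checking.

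The second step is surjectivity onto the stated subring, which is the real content. Write a general element of the target as a sum of monomials $t_0^it_1^j u_{\alpha_0}^{m_0}u_{\alpha_1}^{m_1}u_\beta^{m_2}$. We then invoke the splitting of $\widetilde{H}^{\bigstar}_{\K}(S^0;\uFz)$ into the ``positive cone'' $\Fz[a_V,u_V,\text{fractions}]$ and the ``negative cone'' part. The negative cone consists of classes divisible by $\Sigma^{-1}$-type elements, which are $a$-torsion in a strong sense. These map to zero in $E\K_+$ because they come from $\widetilde{E\K}$-local (geometric) or proper-isotropy phenomena, just as the $\Sigma^{-1}\frac{1}{a^ju^k}$ classes did in Proposition~\ref{q1Aloc}. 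Hence the image is generated by the images of the positive-cone ring generators, which are precisely the listed elements.

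The main obstacle is pinning down the positive cone of $\widetilde{H}^{\bigstar}_{\K}(S^0;\uFz)$, in particular that it is generated by the $a$'s, the $u$'s and exactly those three fractions, and that all other classes die in $E\K_+$. We would handle this by comparing with the Tate square: elements of $\widetilde{H}^\bigstar_{\K}(S^0)$ not killed by $\pi^\bigstar$ are detected after inverting nothing, so they are detected in the homotopy fixed points. We would then check degree by degree, using the relation $a_{\alpha_0}u_{\alpha_1}u_\beta+a_{\alpha_1}u_{\alpha_0}u_\beta+a_\beta u_{\alpha_0}u_{\alpha_1}=0$, that a monomial with $u$-exponents outside the cone generated by the listed units cannot be hit. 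The reason is that its degree is one where the coefficient ring is concentrated in the negative cone.
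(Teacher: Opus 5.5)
The gap is in your second step, the inclusion of the image of $\pi^\bigstar$ in the subring. You borrow a $C_2$-style picture in which $\widetilde{H}^{\bigstar}_{\K}(S^0;\uFz)$ is a positive cone generated by the nine classes plus a negative cone that dies. The $\K$ coefficient ring is considerably more complicated than that. More importantly, the part of the picture you actually need is the proposition itself: that modulo classes dying in Borel cohomology, everything is a polynomial in the nine classes.

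What you can prove easily is the following. Write $t_{\alpha_0}=t_0$, $t_\beta=t_1$, $t_{\alpha_1}=t_0+t_1$. Every class annihilated by a monomial in the $a_V$ dies under $\pi^\bigstar$, because $\pi^\bigstar(a_V)=t_Vu_V$ is a nonzerodivisor. So the image is the $a$-torsion-free quotient, and saying this quotient is generated by the listed classes just restates the claim.

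Your degree-by-degree check does not repair this. Write $u^m=\prod_V u_V^{m_V}$ and $t^i=\prod_V t_V^{i_V}$. In degree $\gamma=-k+\sum_V m_VV$ the target is $\Fz[t_0,t_1]_d\,u^m$ with $d=\sum_V m_V-k$. The image there is a linear subspace that is not spanned by monomials in $t_0,t_1$. For instance $a_{\alpha_1}\mapsto t_0u_{\alpha_1}+t_1u_{\alpha_1}$, yet neither summand is in the image. To see this, restrict to $\ker(\alpha_1)\cong C_2$: degree $\alpha_1$ becomes degree $1$, where $\widetilde{H}^{1}_{C_2}(S^0;\uFz)=0$, while each summand restricts to $t\neq 0$. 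So showing that individual monomials with $u$-exponents outside the cone are not hit bounds nothing. Also, degree $\alpha_1$ contains the positive class $a_{\alpha_1}$, so your stated reason (the coefficient ring is concentrated in the negative cone there) fails.

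What is missing is a genuine upper bound, and Lemma~\ref{lgeomfixed} supplies one. After inverting the $a_V$, Borel cohomology becomes $\Fz[t_0,t_1][p_0^{-1}][u_V^{\pm}]$ with $p_0=t_0t_1(t_0+t_1)$. The ring $\pi_\bigstar(\widetilde{E\cP}\wedge H\uFz)$ is generated by the $u_V$ and $a_V^{\pm1}$, and it maps compatibly into this localized Borel ring. Now suppose $\pi^\bigstar(x)=f\,u^m$ with $x$ in degree $\gamma$. Then $f$ is a combination of the $t^{m-j}$ with $j\ge 0$ and $\sum_V j_V=k$. Hence $f\prod_V t_V^{k-m_V}$ is a polynomial, so $\prod_V t_V^{c_V}$ divides $f$, where $c_V=\max(0,m_V-k)$.

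Conversely, the $u$-monomials generated by the $u_V$ and the three fractions are exactly the $u^e$ with $e_V\le \sum_W e_W$ for all $V$. So the degree-$\gamma$ part of the subring is $\prod_V t_V^{c_V}\cdot\Fz[t_0,t_1]_{d-\sum_V c_V}\,u^m$, which meets the bound.

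Your first step is also imprecise. The relevant cofibre sequence is $\K/K_+\to S^0\to S^{\beta}$ with $K=\ker(\beta)$. Since $a_{\alpha_0}u_{\alpha_1}+u_{\alpha_0}a_{\alpha_1}$ restricts to $2a_\sigma u_\sigma=0$ on $K$, it equals $a_\beta v$ for some $v$. Applying $\pi^\bigstar$ gives $t_1u_\beta\,\pi^\bigstar(v)=t_1u_{\alpha_0}u_{\alpha_1}$. Because $t_1$ is a nonzerodivisor, $\pi^\bigstar(v)=u_{\alpha_0}u_{\alpha_1}/u_\beta$ exactly, with no ``up to $a$-multiples''.

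The paper's own proof factors $\pi^\bigstar$ through the $u_V$-localization and reads off the generators. It is equally brief about the upper bound, so that is the step that has to be written out.
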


\begin{proof}
In $\widetilde{H}^{\bigstar}_{\K}(E{\K}_{+};\uFz)$ the classes $u_V$ are invertible for all nontrivial representations $V\in\{\alpha_0,\alpha_1,\beta\}$. Hence the map $\pi^\bigstar$ factors through the localization of $\widetilde{H}^{\bigstar}_{\K}(S^0;\uFz)$ obtained by inverting these classes.

The image is therefore generated by those elements of the localized ring that arise from $\widetilde{H}^{\bigstar}_{\K}(S^0;\uFz)$ under this map. Concretely, this yields the stated generators together with the single relation coming from the Euler class relation among $\alpha_0, \alpha_1,\beta$. This completes the proof.
\end{proof}

\begin{lemma}\label{lgeomfixed}
The $\RO(\K)$-graded homotopy of $\widetilde{E\cP}\wedge H\uFz$ is 
\[
\pi_{\bigstar}(\widetilde{E\cP}\wedge H\uFz)\cong \dfrac{
\Fz\!\left[
u_{\alpha_0},u_{\alpha_1},u_{\beta},
a_{\alpha_0}^{\pm1},a_{\alpha_1}^{\pm1},a_{\beta}^{\pm1}
\right]
}{
\bigl(
a_{\alpha_0}u_{\alpha_1}u_{\beta}
+a_{\alpha_1}u_{\alpha_0}u_{\beta}
+a_{\beta}u_{\alpha_0}u_{\alpha_1}
\bigr)
}.
\]

In particular, the geometric fixed-point homotopy ring of \(H\uFz\) is given by
\[
\pi_\ast\bigl(\Phi^{\K}(H\uFz)\bigr)
\cong
\dfrac{\Fz[x_{\alpha_0},x_{\alpha_1},x_{\beta}]}
{\bigl(x_{\alpha_0}x_{\alpha_1}+x_{\alpha_0}x_{\beta}+x_{\alpha_1}x_{\beta}\bigr)}.
\]
Under this identification, the degree 1 class \(x_V\) is represented by \(u_V/a_V\) for each
\(V\in\{\alpha_0,\alpha_1,\beta\}\).
\end{lemma}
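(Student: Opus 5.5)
Since $\widetilde{E\cP}$ is the cofibre of $E\cP_+\to S^0$, the plan is to realize $\widetilde{E\cP}\wedge H\uFz$ as a localization of $H\uFz$ and then read off the ring from the localized version of Proposition~\ref{pproj1}(1).

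\textbf{Step 1: $\widetilde{E\cP}$ as a localization.} For $\K$ one has
\[
\widetilde{E\cP}\simeq \widetilde{E\cF_{\alpha_0}}\wedge\widetilde{E\cF_{\alpha_1}}\wedge\widetilde{E\cF_{\beta}}.
\]
This follows from the fixed-point characterization: for a proper subgroup $B$, one of $\alpha_0,\alpha_1,\beta$ restricts trivially to $B$, so the $B$-fixed points of the smash product are contractible. For $B=\K$ every factor has fixed points $S^0$. By the Lemma $E\cF_\alpha\simeq S(\infty\alpha)$, each factor is $S^{\infty\alpha}$, which is the telescope of multiplication by $a_\alpha$. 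By the Corollary this gives
\[
\pi_\bigstar(\widetilde{E\cP}\wedge H\uFz)\cong \widetilde{H}^{-\bigstar}_{\K}(S^0;\uFz)[a_{\alpha_0}^{-1},a_{\alpha_1}^{-1},a_\beta^{-1}].
\]

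\textbf{Step 2: compute the localized ring.} Rather than use the full (complicated) $\RO(\K)$-graded coefficient ring, I would compare with $E\K_+$ via the Tate-type square: inverting all $a$'s kills the $E\cP_+$ part. I would then use Proposition~\ref{prop:image of s0 in ek4}, the generators and the Euler relation $a_{\alpha_0}u_{\alpha_1}u_\beta+a_{\alpha_1}u_{\alpha_0}u_\beta+a_\beta u_{\alpha_0}u_{\alpha_1}=0$ coming from Proposition~\ref{pproj1}(1), to present the coefficient ring after inverting the $a$'s.

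\begin{itemize}
\item Once the $a$'s are invertible, the ratio classes such as $u_{\alpha_0}u_{\alpha_1}/u_\beta$ become expressible through the relation. Solving for $a_\beta u_{\alpha_0}u_{\alpha_1}$ and dividing shows that $u_{\alpha_0}u_{\alpha_1}/u_\beta$ lies in the subring generated by the $u$'s and $a^{\pm}$.
\item The classes $\Sigma^{-1}/(a^ju^k)$ are $a$-torsion, so they vanish after localization.
\end{itemize}

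The result should be exactly $\Fz[u_{\alpha_0},u_{\alpha_1},u_\beta,a^{\pm}]$ modulo the single Euler relation.

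\textbf{Step 3: pass to $\Z$-graded geometric fixed points.} Since the $a_V$ are invertible, every $\RO(\K)$-grading can be shifted to integer grading by multiplying by $a$'s. So $\pi_*\Phi^{\K}H\uFz$ is the degree-$\Z$ part, generated by $x_V=u_V/a_V$, which lie in degree $(V-1)-V=-1$ cohomologically, i.e.\ homotopy degree $1$. Dividing the Euler relation by $a_{\alpha_0}a_{\alpha_1}a_\beta$ gives
\[
x_{\alpha_1}x_\beta+x_{\alpha_0}x_\beta+x_{\alpha_0}x_{\alpha_1}=0.
\]
As a check, the Poincaré series $(1+t)/(1-t)^2$ agrees with the known computation of $\Phi^{\K}H\uFz$ (Holler--Kriz).

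\textbf{Main obstacle.} The hard part is Step 2: proving that the Euler relation is the only relation and that there are no further generators after inverting the $a$'s. I would handle this by checking ranks degreewise against the Poincaré series of $\Phi^{\K}H\uFz$. That series can be computed independently by iterating the $C_2$ result $\Phi^{C_2}H\uFz=\Fz[x]$ through $\Phi^{\K}=\Phi^{\K/H_0}\Phi^{H_0}$, using the known $\K/H_0$-action.
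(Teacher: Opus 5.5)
Your Steps 1 and 3 are the paper's own reduction. The paper inverts $a_{\alpha_0},a_{\alpha_1},a_\beta$ to identify $\widetilde{E\cP}\wedge H\uFz$, passes to integer degrees via $x_V=u_V/a_V$, and then simply quotes Holler--Kriz for $\pi_*\Phi^{\K}(H\uFz)$. The problem is Step 2, where you try to replace that citation.

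\textbf{The gap in Step 2.} Smashing the Tate square with $\widetilde{E\cP}$ makes both horizontal maps equivalences, but it leaves the comparison map $\Phi^{\K}H\uFz\to\Phi^{\K}F(E\K_+,H\uFz)$ unchanged, and that map is not an equivalence. Its target has $\pi_*\cong\Fz[t_0^{\pm1},t_1^{\pm1},(t_0+t_1)^{-1}]$, which is nonzero in negative degrees. Already for $C_2$ this map is only the connective cover $\Fz[x]\to\Fz[x^{\pm1}]$. So Proposition~\ref{prop:image of s0 in ek4} together with the Euler relation computes only the \emph{image} of $\pi_\bigstar(H\uFz)[a^{-1}]$ in localized Borel cohomology.

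That image part you can actually finish rigorously. In integer degrees the image is the $\Fz$-subalgebra generated by $t_0^{-1},t_1^{-1},(t_0+t_1)^{-1}$. The Euler quadric is irreducible and this subalgebra has Krull dimension $2$, so the subalgebra is exactly $\Fz[x_{\alpha_0},x_{\alpha_1},x_\beta]/(x_{\alpha_0}x_{\alpha_1}+x_{\alpha_0}x_\beta+x_{\alpha_1}x_\beta)$.

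What is missing is injectivity: you need every class in $\ker\pi^\bigstar$ to be annihilated by some monomial in the $a_V$. Your remark that the classes $\Sigma^{-1}\frac{1}{a^ju^k}$ are $a$-torsion does not address this. Those classes live in $\widetilde{H}^{\bigstar}_{\K}(E{\cF_{\subseteq K}}_+)$. The Borel-invisible part of $\pi_\bigstar H\uFz$ for $\K$ is much larger, and it is exactly the ``complicated coefficient ring'' you set out to avoid.

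\textbf{Your proposed fix.} Surjectivity onto the image plus a degreewise rank count is logically sound. However, the rank count is precisely the content of Holler--Kriz, so the argument closes only by citing the same source the paper cites.

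The ``independent'' route through $\Phi^{\K}=\Phi^{\K/H_0}\Phi^{H_0}$ does not work with only the $\K/H_0$-action on $\pi_*\Phi^{H_0}H\uFz=\Fz[x]$, which is trivial. You would need the genuine $\K/H_0$-equivariant homotopy type of $\Phi^{H_0}H\uFz$. For instance, treating it as $\bigvee_n\Sigma^nH\uFz$ gives Poincar\'e series $1/(1-t)^2$. That contradicts the three degree-one classes $x_{\alpha_0},x_{\alpha_1},x_\beta$, which map to the linearly independent elements $t_0^{-1},t_1^{-1},(t_0+t_1)^{-1}$. Determining the correct equivariant splitting is essentially as hard as the original computation.

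As it stands, then, Step 2 is a consistency check on the answer rather than a proof of it.
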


\begin{proof}
    Since $\widetilde{E\cP}\wedge H\uFz \simeq H\uFz[a_{\alpha_0}^{\pm1},a_{\alpha_1}^{\pm1},a_{\beta}^{\pm1}]$, thus it is enough to compute $\pi_\ast\bigl(\Phi^{\K}(H\uFz)\bigr)$. Hence the result follows from \cite{HK17}. 
\end{proof}
\begin{remark}
There is an $\Fz$-vector space decomposition
\[
\pi_\ast\bigl(\Phi^{\K}(H\uFz)\bigr)
\cong
\Fz[x_{\alpha_0},x_\beta]
\oplus
x_{\alpha_1}\Fz[x_{\alpha_1},x_\beta].
\]
Equivalently, the set of monomials
\[
\{x_{\alpha_0}^i x_\beta^j \mid i,j\ge 0\}
\cup
\{x_{\alpha_1}^m x_\beta^n \mid m\ge 1,\ n\ge 0\}
\]
forms an $\Fz$-basis of
\[
\frac{\Fz[x_{\alpha_0},x_{\alpha_1},x_\beta]}
{\bigl(x_{\alpha_0}x_{\alpha_1}+x_{\alpha_0}x_\beta+x_{\alpha_1}x_\beta\bigr)}.
\]
\end{remark}
\end{mysubsect}
\begin{mysubsect}{Bredon cohomology of $E\cP$}
Let us consider $\cF=\{e, H_0, H_1\}$ and $\cP=\{e, H_0, H_1, K\}$, then we have the cofibre sequences:
\begin{myeq}\label{eqcof1}
{E_{C_2}\Z/2}_+ \to E{\cP}_+ \to E\cFk
\end{myeq}
and 
\begin{myeq}\label{eqcof2}
E{\K}_+\to E{\cF_{\subseteq K}}_+\to E\cFk
\end{myeq}
where $E_{C_2}\Z/2:=E\cF$.

The diagram \eqref{pullcof}, induces the following diagram of long exact sequences in $RO(\K)$-Bredon cohomology:
\begin{myeq}\label{interdiag}
\xymatrix{
\cdots 
\widetilde{H}^{\bigstar}_{\K}(E\cFk) \ar[r] \ar@{=}[d] &
\widetilde{H}^{\bigstar}_{\K}(E\cP_+) \ar[r] \ar[d] &
\widetilde{H}^{\bigstar}_{\K}({E_{C_2}\Z/2}_+) \ar[r]^{\delta^u} \ar[d]^{q^{\bigstar}_{\mathrm{tot}}} &
\widetilde{H}^{\bigstar+1}_{\K}(E\cFk) \ar@{=}[d] 
\cdots
\\
\cdots 
\widetilde{H}^{\bigstar}_{\K}(E\cFk) \ar[r] &
\widetilde{H}^{\bigstar}_{\K}(E{\cF_{\subseteq K}}_+) \ar[r]_{q^\bigstar} &
\widetilde{H}^{\bigstar}_{\K}(E{\K}_+) \ar[r]^{\delta} &
\widetilde{H}^{\bigstar+1}_{\K}(E\cFk) \ 
\cdots
}
\end{myeq}

\begin{proposition}\label{main}
 There is an equivalence of rings
   \[
   \widetilde{H}^{\bigstar}_{\K}({E_{C_2}\Z/2}_+;\uFz) \cong \frac{\left[\Fz[a_{\beta},u_{\beta}]\oplus \bigoplus_{j,k \geq 1} \Fz\left\langle\Sigma^{-1}\frac{1}{ a^j_{\beta}u^k_{\beta}}\right\rangle\right] [a_{\alpha_0}, u_{\alpha_0}, a_{\alpha_1} , u_{\alpha_1}, v^\pm ]}{(vu_\beta -u_{\alpha_0}u_{\alpha_1}, va_\beta -(a_{\alpha_0}u_{\alpha_1} +u_{\alpha_0}a_{\alpha_1}))}
   \]
   where $v$ is the invertible class in degree $(-1+{\alpha_0}+{\alpha_1}-\beta).$
\end{proposition}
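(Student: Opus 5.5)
The plan is to identify $E_{C_2}\Z/2=E\cF$, for $\cF=\{e,H_0,H_1\}$, with a representation sphere. Then I will construct the periodicity class $v$ geometrically, and finally compute the remaining ``untwisted'' part with the same kind of spectral sequence used for $E{\cF_{\subseteq K}}_+$.

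\emph{Step 1: a model for $E\cF$.} Put $V=\alpha_0\oplus\alpha_1$. Then $V^{H_0}\neq 0$ and $V^{H_1}\neq 0$. On the other hand $V^{K}=0$ and $V^{\K}=0$, because both $\alpha_0$ and $\alpha_1$ restrict nontrivially to $K$. Hence $\cF=\cF_V$ and $E\cF\simeq S(\infty V)$. This is the analogue of the Lemma identifying $E\cF_\alpha\simeq S(\infty\alpha)$. In particular $K$ acts freely on $E\cF$, and $E\cF/K$ is a model for $B_{C_2}\Z/2$ with $C_2=\K/K$. This explains the notation.

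\emph{Step 2: the invertible class $v$.} I will compare the representations $\alpha_0+\alpha_1$ and $1+\beta$ on every isotropy group of $E\cF$:
\begin{itemize}
\item on $e$ both are trivial of dimension $2$;
\item on $H_0$, $\alpha_0$ is trivial while $\alpha_1$ and $\beta$ are the sign representation, so both become $1+\sigma$;
\item on $H_1$ the same holds symmetrically.
\end{itemize}
Hence the bundles $E\cF\times(\alpha_0\oplus\alpha_1)$ and $E\cF\times(1\oplus\beta)$ are locally isomorphic. Since $\cF$-spaces are built from cells $\K/H\times D^n$ with $H\in\cF$, I expect them to be globally stably $\Fz$-orientation-compatible. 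A mod $2$ Thom isomorphism then yields a unit $v\in\widetilde H^{-1+\alpha_0+\alpha_1-\beta}_{\K}(E\cF_+;\uFz)$.

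Concretely, I would try to define $v$ as the ratio $u_{\alpha_0}u_{\alpha_1}/u_\beta$, which makes sense after restricting to the $K$-free part. I would check invertibility by restricting to the orbits $\K/H_0$, $\K/H_1$ and $\K/e$. There $v$ restricts to a unit, and a cell-by-cell (Mayer--Vietoris) argument then shows $v$ is a unit. The two relations $vu_\beta=u_{\alpha_0}u_{\alpha_1}$ and $va_\beta=a_{\alpha_0}u_{\alpha_1}+u_{\alpha_0}a_{\alpha_1}$ should follow by applying the injective map $q^\bigstar_{\mathrm{tot}}$ of \eqref{interdiag} into $\widetilde H^\bigstar_{\K}(E\K_+)$. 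There, by Proposition~\ref{pproj1}(1), we have $u_\beta$ invertible and
\[
a_\beta=u_\beta\Bigl(\frac{a_{\alpha_0}}{u_{\alpha_0}}+\frac{a_{\alpha_1}}{u_{\alpha_1}}\Bigr).
\]
In the regions where injectivity fails, I would instead check the relations on the orbit restrictions.

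\emph{Step 3: the additive computation.} Multiplication by $v^{\pm1}$ identifies every $\RO(\K)$-grading with one in $\Z\{1,\alpha_0,\alpha_1,\beta\}$ modulo $(\alpha_0+\alpha_1-\beta-1)$. I will run the skeletal filtration spectral sequence for $S(\infty V)$, as was done for $E{\cF_{\subseteq K}}$. Alternatively, I can use the Gysin sequences for $S(nV)_+\to S(n{+}1)V_+$ with Euler class $a_V=a_{\alpha_0}a_{\alpha_1}$, whose subquotients are wedges of $\K/H_0{}_+$, $\K/H_1{}_+$ and $\K/e_+$ smashed with spheres. The $E_2$-page is then a sum of copies of
\[
\widetilde H^\bigstar_{\K}(\K/H_{i+};\uFz)=\widetilde H^{\bigstar}_{C_2}(S^0;\uFz).
\]
Transported to the $\beta$-direction, these produce the $C_2$-cohomology of a point, $\Fz[a_\beta,u_\beta]\oplus\bigoplus\Fz\langle\Sigma^{-1}/a_\beta^ju_\beta^k\rangle$. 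The cells then contribute free polynomial generators $a_{\alpha_0},u_{\alpha_0},a_{\alpha_1},u_{\alpha_1}$, cut down by the two relations. I will argue collapse by a dimension count: the result is free over $\widetilde H^\bigstar_{C_2}$ of a point in the sense of Kronholm freeness for $B_{C_2}\Z/2$. Comparing with the known Poincaré series of $E\K_+$ and of $E{\cF_{\subseteq K}}_+$ in the diagram \eqref{interdiag} then rules out differentials.

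\emph{Main obstacle.} The hard part is the multiplicative structure: showing the displayed quotient has no further relations, and that the negative-cone classes $\Sigma^{-1}/(a_\beta^ju_\beta^k)$ multiply as stated. These classes vanish in $E\K_+$, so they cannot be detected by $q^\bigstar_{\mathrm{tot}}$. I would handle them by restricting to $\K/H_0$ and $\K/H_1$, where $\beta$ becomes the sign representation and these classes are the standard negative cone of $C_2$. I would then compare Poincaré series in each fixed $\RO(\K)$-degree to conclude that the map from the presented ring is an isomorphism.
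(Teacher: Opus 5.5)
The paper gives no argument for this proposition beyond citing \cite{GK26}. The maps $\pi_0^\bigstar,\pi_1^\bigstar$ to $\widetilde H^\bigstar_{\K}(E{\cF_{\subseteq H_i}}_+)$ in \eqref{locsquare1} hint at how that computation is organised. So your outline has to stand on its own, and as written it has genuine gaps.

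\textbf{The class $v$ is never constructed.} Your Step~1 and the isotropy comparison in Step~2 are correct. But the ``ratio'' $u_{\alpha_0}u_{\alpha_1}/u_\beta$ does not exist in $\widetilde H^\bigstar_{\K}({E_{C_2}\Z/2}_+)$.
\begin{itemize}
\item $K$ already acts freely on all of $E\cF$, so there is no ``$K$-free part'' to restrict to.
\item $u_\beta$ is not a unit there: it restricts to $u_\sigma$ along $\K/H_0$.
\item The ratio only exists after applying $q^\bigstar_{\mathrm{tot}}$, and lifting it back is the whole issue. ``I expect a Thom isomorphism'' does not replace this.
\end{itemize}
One way to close this step uses $E\cF\simeq E{\cF_{\subseteq H_0}}*E{\cF_{\subseteq H_1}}$, with $E{\cF_{\subseteq H_0}}\times E{\cF_{\subseteq H_1}}\simeq E\K$, which gives a Mayer--Vietoris sequence. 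By the analogue of Proposition~\ref{pproj1}(2), $u_{\alpha_0}u_{\alpha_1-\beta}$ is a unit on $E{\cF_{\subseteq H_0}}_+$ and $u_{\alpha_1}u_{\alpha_0-\beta}$ is a unit on $E{\cF_{\subseteq H_1}}_+$. Both restrict to $u_{\alpha_0}u_{\alpha_1}u_\beta^{-1}$ on $E\K_+$, so they glue. The gluing is unique because $\widetilde H^{|v|-1}_{\K}(E\K_+)=0$. Your orbit-restriction criterion then shows the glued class is a unit. Alternatively, use Real orientability of $H\uFz$ for the Real line bundle $E\cF\times_K(\alpha_0\oplus\alpha_1)$ over $E\cF/K$.

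\textbf{The additive and multiplicative computation is the more serious gap.} Your Step~3 dimension count is not available. The only row of \eqref{interdiag} containing ${E_{C_2}\Z/2}_+$ also contains $E\cP_+$, and the paper computes the cohomology of $E\cP_+$ \emph{from} this proposition. Kronholm's freeness is an $\RO(C_2)$-graded statement. After $v$-periodicity the grading lattice still has rank three, and the remaining direction is not covered: e.g.\ $\alpha_0$, which is trivial on $H_0$ but the sign representation on $H_1$.

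Worse, the detection maps you rely on for the ``main obstacle'' are not jointly injective. These are $q^\bigstar_{\mathrm{tot}}$ together with restriction to $\K/H_0$, $\K/H_1$ and $\K/e$. Take the class $a_{\alpha_0}a_{\alpha_1}\cdot\Sigma^{-1}\frac{1}{a_\beta u_\beta}$.
\begin{itemize}
\item It dies in $E\K_+$, since it is $u_\beta$-torsion and $u_\beta$ is a unit there.
\item It dies on $H_0$, since $a_{\alpha_0}\mapsto 0$.
\item It dies on $H_1$, since $a_{\alpha_1}\mapsto 0$.
\item It dies on $e$.
\end{itemize}
Yet it is nonzero in the ring you are trying to establish. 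Indeed, $u_{\alpha_0}u_{\alpha_1}$ and $a_{\alpha_0}u_{\alpha_1}+u_{\alpha_0}a_{\alpha_1}$ form a regular sequence, so $\Fz[a_{\alpha_i},u_{\alpha_i}]$ is free over the subalgebra they generate. Moreover, $a_{\alpha_0}a_{\alpha_1}$ is not in the ideal they generate.

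So your tools cannot show that such negative-cone products are nonzero. That is exactly what surjectivity of the presented ring onto the cohomology requires, and hence what the Poincar\'e-series comparison needs. What is missing is a module-level computation. For instance, make both pieces of the Mayer--Vietoris sequence above and their maps to $E\K_+$ explicit. Then show that the cohomology is free over $\bigl[\Fz[a_\beta,u_\beta]\oplus\bigoplus_{j,k\ge1}\Fz\langle\Sigma^{-1}\frac{1}{a_\beta^ju_\beta^k}\rangle\bigr][v^{\pm}]$ on an explicit set of monomials in $a_{\alpha_i},u_{\alpha_i}$, as the claimed ring is.
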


\begin{proof}
    See \cite{GK26}.
\end{proof}

\begin{proposition}
    The image of $q^\bigstar_{\mathrm{tot}} : \widetilde{H}^{\bigstar}_{\K}({E_{C_2}\Z/2}_+;\uFz) \to \widetilde{H}^{\bigstar}_{\K}(E{\K}_+;\uFz)$ in \eqref{interdiag} is given by
    \[
\dfrac{
\Fz\;[
a_{\alpha_0},a_{\alpha_1},a_{\beta},
u_{\alpha_0},u_{\alpha_1},u_{\beta}, (\frac{u_{\alpha_0}u_{\alpha_1}}{u_{\beta}})^{\pm}
]
}{
(
a_{\alpha_0}u_{\alpha_1}u_{\beta}
+a_{\alpha_1}u_{\alpha_0}u_{\beta}
+a_{\beta}u_{\alpha_0}u_{\alpha_1}
)}.
    \]
\end{proposition}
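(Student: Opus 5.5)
The plan is to compute $q^\bigstar_{\mathrm{tot}}$ on the ring generators in Proposition~\ref{main}, and then to read off the image using the presentation of the target in Proposition~\ref{pproj1}(1). Since $q_{\mathrm{tot}}\colon E\K_+\to {E_{C_2}\Z/2}_+$ is the map induced by the inclusion of families $\{e\}\subset\mathcal H$, it commutes with the collapse maps to $S^0$. So, exactly as in diagram \eqref{locsquare}, we get a commutative triangle
\[
\pi^\bigstar = q^\bigstar_{\mathrm{tot}}\circ \hat q^\bigstar_{\mathcal H},
\]
and $q^\bigstar_{\mathrm{tot}}$ is a ring map.

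\textbf{The polynomial generators.} The classes $a_{\alpha_0},a_{\alpha_1},a_\beta,u_{\alpha_0},u_{\alpha_1},u_\beta$ in the source are restrictions of the corresponding classes on $S^0$. By the triangle above they therefore map to the classes with the same names in $\widetilde H^\bigstar_{\K}(E\K_+)$, given concretely by Proposition~\ref{pproj1}(1).

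\textbf{The class $v$.} The relation $vu_\beta=u_{\alpha_0}u_{\alpha_1}$ holds in the source, and $u_\beta$ is invertible in the target. Hence
\[
q^\bigstar_{\mathrm{tot}}(v)=\frac{u_{\alpha_0}u_{\alpha_1}}{u_\beta},
\qquad
q^\bigstar_{\mathrm{tot}}(v^{-1})=\frac{u_\beta}{u_{\alpha_0}u_{\alpha_1}}.
\]
The second relation, $va_\beta=a_{\alpha_0}u_{\alpha_1}+u_{\alpha_0}a_{\alpha_1}$, multiplied by $u_\beta$, becomes exactly the Euler relation
\[
a_{\alpha_0}u_{\alpha_1}u_\beta+a_{\alpha_1}u_{\alpha_0}u_\beta+a_\beta u_{\alpha_0}u_{\alpha_1}=0
\]
in the target. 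This is a consistency check that the assignment is well defined.

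\textbf{The torsion classes.} The remaining generators are $\Sigma^{-1}\frac{1}{a_\beta^j u_\beta^k}$, and I claim they map to zero. I would argue this as in Proposition~\ref{q1Aloc}. These classes are $a_\beta$-torsion: each is annihilated by a power of $a_\beta$, because they arise from the $a_\beta$-divisible part coming from $E\cF_{\subseteq K}$-type pieces. The target, however, is $a_\beta$-torsion free. Indeed, it is a domain after inverting the $u$'s: it is a polynomial ring in $t_0,t_1$ with units adjoined, and $a_\beta=t_1u_\beta$ with $t_1$ a nonzerodivisor. A torsion class must therefore go to $0$.

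This torsion argument is the step I expect to be the main obstacle. It requires knowing that, in the structure of Proposition~\ref{main}, a power of $a_\beta$ actually annihilates these classes. If that is not directly visible, I would instead do a degree check. Each such class sits in a degree where the target has only the $a$-multiples allowed by the grading, and restriction to the underlying group $\{e\}$ kills the torsion classes. Alternatively, one can compare through the diagram \eqref{interdiag}.

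\textbf{Identifying the image.} The image is therefore the subring generated by $a_{\alpha_0},a_{\alpha_1},a_\beta,u_{\alpha_0},u_{\alpha_1},u_\beta$ and $(u_{\alpha_0}u_{\alpha_1}/u_\beta)^{\pm}$. The last thing to confirm is that the only relation among these generators is the Euler relation. For this I would use the presentation of Proposition~\ref{pproj1}(1): there the Euler relation is the sole relation, and the subring above is a localization of a polynomial subring of it. Hence no further relations appear, and this gives the stated presentation.
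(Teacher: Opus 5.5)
Your generator computation is correct, and it takes a more self-contained route than the paper. The paper factors $q^\bigstar_{\mathrm{tot}}$ as $q_i^\bigstar\circ\pi_i^\bigstar$ through $\widetilde{H}^\bigstar_{\K}(E{\cF_{\subseteq H_i}}_+;\uFz)$ and imports explicit formulas for those maps from \cite{GK26}. You use only three ingredients: the collapse triangle $\pi^\bigstar=q^\bigstar_{\mathrm{tot}}\circ\hat q^\bigstar$, the relation $vu_\beta=u_{\alpha_0}u_{\alpha_1}$ with $u_\beta$ a unit in the target, and a torsion argument. The torsion argument is sound. In the negative-cone structure, $a_\beta^j$ kills $\Sigma^{-1}\frac{1}{a_\beta^ju_\beta^k}$, while $a_\beta=t_1u_\beta$ is a nonzerodivisor in the domain $\Fz[t_0,t_1,u_{\alpha_0}^{\pm},u_{\alpha_1}^{\pm},u_\beta^{\pm}]$. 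Rely on this rather than on the fallback via restriction to $\{e\}$, since that restriction is not injective on the target. This proves that the image is the subring of $\widetilde{H}^\bigstar_{\K}(E\K_+;\uFz)$ generated by the listed elements.

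Your final paragraph, however, asserts something false. Put $w=u_{\alpha_0}u_{\alpha_1}u_\beta^{-1}$ and $P=\Fz[a_{\alpha_0},a_{\alpha_1},a_\beta,u_{\alpha_0}^{\pm},u_{\alpha_1}^{\pm},u_\beta^{\pm}]$. Let $Q\subset P$ be the subring generated by the $a$'s, the $u$'s and $w^{\pm1}$, and write $E$ for the Euler relation. Then the image is $Q/(Q\cap EP)$, and $Q\cap EP\neq EQ$.
\begin{itemize}
\item The element $u_\beta^{-1}E=wa_\beta+a_{\alpha_0}u_{\alpha_1}+u_{\alpha_0}a_{\alpha_1}$ lies in $Q$ and vanishes in the target. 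It is the image of the second relation of Proposition~\ref{main}, as you noticed yourself.
\item It is not in $EQ$, because $u_\beta^{-1}\notin Q$. Monomials of $Q$ have $u$-exponents $(i+d,\,j+d,\,k-d)$ with $i,j,k\ge 0$, so the exponent $(0,0,-1)$ would force $d=-i\le 0$ and then $k=d-1<0$.
\item For the same reason $Q$ is not a localization of $\Fz[a_{\alpha_0},\dots,u_\beta]$: any such localization containing $w$ also contains $u_\beta^{-1}$.
\end{itemize}
So the Euler relation is not the only relation. The displayed quotient is correct only when read as a subring of the target, with the fraction taken literally there.

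If you want an honest presentation, use $u_\beta=w^{-1}u_{\alpha_0}u_{\alpha_1}$ and $a_\beta=w^{-1}(a_{\alpha_0}u_{\alpha_1}+u_{\alpha_0}a_{\alpha_1})$. These show the image is $\Fz[a_{\alpha_0},a_{\alpha_1},u_{\alpha_0},u_{\alpha_1},w^{\pm1}]$. This ring is genuinely polynomial, because $t_0u_{\alpha_0},(t_0+t_1)u_{\alpha_1},u_{\alpha_0},u_{\alpha_1},w$ generate $\Fz(t_0,t_1,u_{\alpha_0},u_{\alpha_1},u_\beta)$, which has transcendence degree $5$. Equivalently, the relations among your generators are generated by the two relations of Proposition~\ref{main}. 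Those two imply the Euler relation, but the converse fails.
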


\begin{proof}
 Since \(q^{\bigstar}_{\mathrm{tot}}\) is the composite of the maps \(q^{\bigstar}_{i}\) and \(\pi^{\bigstar}_{i}\), whose descriptions are given explicitly in~\cite{GK26}, the claim follows from the commutativity of the diagram
\begin{myeq}\label{locsquare1}
\xymatrix{
& \widetilde{H}^\bigstar_{\K}(E{\cF_{\subseteq H_1}}_{+};\uFz) \ar[dl]_{q_1^\bigstar} \\
\widetilde{H}^\bigstar_{\K}(E{\K}_+;\uFz) & &
\widetilde{H}^\bigstar_{\K}({E_{C_2}\Z/2}_+;\uFz)
\ar[dl]^{\pi_0^\bigstar}\ar[ll]_{q^\bigstar_{\mathrm{tot}}} \ar[ul]_{\pi_1^\bigstar} \\
& \widetilde{H}^\bigstar_{\K}(E{\cF_{\subseteq H_0}}_{+};\uFz) \ar[ul]^{q_0^\bigstar}
}
\end{myeq}
which completes the proof.
\end{proof}


\begin{lemma}\label{lem:cohomology of ep}\begin{enumerate}
    \item The kernel of the boundary map $\delta^{u}: \widetilde{H}^{\bigstar}_{\K}({E_{C_2}\Z/2}_+;\uFz) \to 
\widetilde{H}^{\bigstar+1}_{\K}(E\cFk;\uFz)$ in \eqref{interdiag} is given by
 \[
    \frac{\left[\Fz[a_{\beta},u_{\beta}]\oplus \bigoplus_{j,k \geq 1; \ell \in \Z} \Fz\left\langle{v^{\ell}\Sigma^{-1}\frac{1}{ a^j_{\beta}u^k_{\beta}}}\right\rangle\right] [a_{\alpha_0}, u_{\alpha_0}, a_{\alpha_1} , u_{\alpha_1}, v ,\frac{u^2_{\alpha_0}}{v},\frac{u^2_{\alpha_1}}{v}]}{(vu_\beta -u_{\alpha_0}u_{\alpha_1}, va_\beta -(a_{\alpha_0}u_{\alpha_1} +u_{\alpha_0}a_{\alpha_1}))}.
   \]
   \item The cokernel of the boundary map $\delta^{u}$ is given by
   \begin{align*}
   \Fz\!\left\langle\, 
\Sigma^{-1}\,\frac{1}{a_{\alpha_0}^{\,j} u_{\alpha_0}^{\,k}} 
\;:j,k\geq 1\right\rangle
\bigl[a_{\beta},\, u_{\beta}^{\pm},\, u_{\alpha_1 - {\alpha_0}}^{\pm}\bigr]
\oplus\Sigma^{-1}u^{-1}_{\alpha_0}\Fz[a_{\beta},a_{\alpha_0},a_{\alpha_1},(\frac{u_{\alpha_0}}{u_{\alpha_1}})^{\pm},u^{-1}_{\beta},u^{-1}_{\alpha_0}]\\
\oplus\Fz\left\langle{\Sigma^{-1}\frac{u_{\beta}}{u^2_{\alpha_0}},\Sigma^{-1}\frac{u_{\beta}u_{\alpha_1}}{u^2_{\alpha_0}}}\right\rangle[a_{\beta},a_{\alpha_0},a_{\alpha_1},\frac{u_{\alpha_1}}{u_{\alpha_0}},\frac{u_{\beta}}{u_{\alpha_0}},u^{-1}_{\alpha_0}]\\
\oplus\Fz\left\langle{\Sigma^{-1}\frac{u_{\beta}}{u^2_{\alpha_1}},\Sigma^{-1}\frac{u_{\beta}u_{\alpha_0}}{u^2_{\alpha_1}}}\right\rangle[a_{\beta},a_{\alpha_0},a_{\alpha_1},\frac{u_{\alpha_0}}{u_{\alpha_1}},\frac{u_{\beta}}{u_{\alpha_1}},u^{-1}_{\alpha_1}]\\
\oplus\Fz\left\langle{\Sigma^{-1}\frac{u_{\beta}}{u^2_{\alpha_0}u_{\alpha_1}},\Sigma^{-1}\frac{u_{\beta}}{u^2_{\alpha_1}u_{\alpha_0}}}\right\rangle[a_{\beta},a_{\alpha_0},a_{\alpha_1},\frac{u_{\beta}}{u_{\alpha_0}},\frac{u_{\beta}}{u_{\alpha_1}},u^{-1}_{\alpha_0},u^{-1}_{\alpha_1}].
   \end{align*}
\end{enumerate}

\end{lemma}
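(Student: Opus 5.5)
The plan is to compute $\delta^u$ by comparing it with the bottom boundary map $\delta$ in \eqref{interdiag}, using the equality of the outer terms. Since the right square commutes and the vertical map on $\widetilde{H}^{\bigstar+1}_{\K}(E\cFk)$ is the identity, we have
\[
\delta^u=\delta\circ q^{\bigstar}_{\mathrm{tot}}.
\]
Consequently $\ker\delta^u=(q^\bigstar_{\mathrm{tot}})^{-1}(\ker\delta)=(q^\bigstar_{\mathrm{tot}})^{-1}(\mathrm{Im}\, q^\bigstar)$, and $\mathrm{coker}\,\delta^u$ is the cokernel of $\delta$ restricted to $\mathrm{Im}\,q^\bigstar_{\mathrm{tot}}$. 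I would therefore take the following inputs as known:
\begin{itemize}
\item Proposition \ref{q1Aloc}, which gives $\mathrm{Im}\,q^\bigstar$: it is generated by $t_1$, $u_{\alpha_0}^{\pm}$, $u_\beta^{\pm}$, $(u_{\alpha_1}/u_{\alpha_0})^{\pm}$ and $t_0u_{\alpha_0}$.
\item The preceding proposition, which gives $\mathrm{Im}\,q^\bigstar_{\mathrm{tot}}$.
\item Proposition \ref{pproj1}(3), which gives the target $\widetilde{H}^{\bigstar}_{\K}(E\cFk)$.
\end{itemize}

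\textbf{Step 1 (kernel).} I would first determine $\ker\delta=\mathrm{Im}\,q^\bigstar$ inside $\widetilde{H}^\bigstar_{\K}(E\K_+)$, written in the $a,u$ presentation of Proposition \ref{pproj1}(1). This is the subring generated by
\[
a_{\alpha_0},\ a_{\alpha_1},\ a_\beta,\ u_{\alpha_0}^{\pm},\ u_\beta^{\pm},\ (u_{\alpha_1}/u_{\alpha_0})^{\pm}.
\]
Intersecting it with $\mathrm{Im}\,q^\bigstar_{\mathrm{tot}}$ means deciding which monomials in $u_{\alpha_0},u_{\alpha_1},u_\beta$ and $v=u_{\alpha_0}u_{\alpha_1}/u_\beta$ (and its inverse) lie in that subring. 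A monomial $u_{\alpha_0}^{a}u_{\alpha_1}^{b}u_\beta^{c}$ lies there exactly when the total degree condition coming from the generators is met. Working through this, $v^{-1}$ must be paired with $u_{\alpha_0}^2$ or $u_{\alpha_1}^2$, which produces the generators $u_{\alpha_0}^2/v$ and $u_{\alpha_1}^2/v$ while $v$ itself survives.

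Next, I would pull this intersection back along $q^\bigstar_{\mathrm{tot}}$. The torsion part $\bigoplus v^\ell\Sigma^{-1}\frac{1}{a_\beta^ju_\beta^k}$ is killed by $q^\bigstar_{\mathrm{tot}}$, by the analogue of Proposition \ref{q1Aloc} for $H_0,H_1$ together with the factorization \eqref{locsquare1}. Hence all of it lies in the kernel for every $\ell\in\Z$. The two relations of Proposition \ref{main} are simply carried along. This yields the formula in (1).

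\textbf{Step 2 (cokernel).} Here I would use exactness of the bottom row. The image $\mathrm{Im}\,\delta\cong \widetilde{H}^\bigstar_{\K}(E\K_+)/\mathrm{Im}\,q^\bigstar$ sits inside $\widetilde{H}^{\bigstar+1}_{\K}(E\cFk)$, and its complement is $\mathrm{coker}\,\delta$. The cokernel of $\delta^u$ then splits as
\[
\mathrm{coker}\,\delta^u=\mathrm{coker}\,\delta\;\oplus\;\delta\bigl(\widetilde{H}^\bigstar_{\K}(E\K_+)\bigr)\big/\delta\bigl(\mathrm{Im}\,q^\bigstar_{\mathrm{tot}}\bigr).
\]

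For the first summand I would use Proposition \ref{pproj1}(3). The torsion classes $\Sigma^{-1}\frac{1}{a_{\alpha_0}^ju_{\alpha_0}^k}$ map isomorphically from $\widetilde{H}(E\cFk)$ to $\widetilde{H}(E\cF_{\subseteq K+})$, so they are not hit by $\delta$ and survive. Their span is the first summand of (2).

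For the second summand, $\delta$ applied to a monomial $u_{\alpha_0}^{a}u_{\alpha_1}^{b}u_\beta^{c}$ outside $\mathrm{Im}\,q^\bigstar$ gives the corresponding $\Sigma^{-1}$ class. I would sort these monomials by which negative powers are forced:
\begin{itemize}
\item an odd total $u$-degree, giving the $\Sigma^{-1}u_{\alpha_0}^{-1}$ family;
\item monomials involving $u_\beta/u_{\alpha_0}^2$ or $u_\beta/u_{\alpha_1}^2$;
\item mixed monomials involving both $u_{\alpha_0}^{-1}$ and $u_{\alpha_1}^{-1}$.
\end{itemize}
Quotienting by $\delta(\mathrm{Im}\,q^\bigstar_{\mathrm{tot}})$ removes exactly the monomials that are expressible through $v^{\pm}$. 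Each surviving family is a free module over the displayed polynomial ring in the $a$'s and the indicated $u$-ratios.

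\textbf{Main obstacle.} I expect Step 2 to be the hardest part: showing that the monomial sets in the last four summands of (2) are disjoint and exhaustive. The difficulty is the Euler relation $a_{\alpha_0}u_{\alpha_1}u_\beta+a_{\alpha_1}u_{\alpha_0}u_\beta+a_\beta u_{\alpha_0}u_{\alpha_1}=0$, which mixes the $a$-monomials. I would first fix a monomial basis adapted to this relation, eliminating $a_{\alpha_1}u_{\alpha_0}u_\beta$ as in the Remark after Lemma \ref{lgeomfixed}. I would then do the bookkeeping degree by degree, checking it against the ranks in a few small $\RO(\K)$-degrees.
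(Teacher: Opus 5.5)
Your route is the paper's. The right-hand square of \eqref{interdiag} gives $\delta^u=\delta\circ q^\bigstar_{\mathrm{tot}}$, hence $\ker\delta^u=(q^\bigstar_{\mathrm{tot}})^{-1}(\mathrm{Im}\,q^\bigstar)$; the paper records only the ``if'' half of this. The $\Sigma^{-1}\frac{1}{a_\beta^ju_\beta^k}$-classes die under $q^\bigstar_{\mathrm{tot}}$. The cokernel is then read off from $\delta$, starting from $\delta^u(v^{-1})=\Sigma^{-1}\frac{u_\beta}{u_{\alpha_0}u_{\alpha_1}}$ and the module structure. Your splitting of $\mathrm{coker}\,\delta^u$ into $\mathrm{coker}\,\delta\cong\ker q^\bigstar$ (the $a_{\alpha_0}$-torsion summand) plus $\mathrm{Im}\,\delta/\delta(\mathrm{Im}\,q^\bigstar_{\mathrm{tot}})$ is a cleaner organisation of the same computation.

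\textbf{The gap.} The input everything rests on is misstated: $\mathrm{Im}\,q^\bigstar$ does not contain $u_{\alpha_0}^{-1}$. In Proposition~\ref{pproj1}(2) only $u_\beta$ and $u_{\alpha_1-\alpha_0}$ are units; $u_{\alpha_0}$ lives in $\Fz[a_{\alpha_0},u_{\alpha_0}]$. The subring you write down contains $u_{\alpha_0}^{\pm}$, $u_\beta^{\pm}$, $(u_{\alpha_1}/u_{\alpha_0})^{\pm}$ and all the $a$'s, so it is the whole of $\widetilde{H}^{\bigstar}_{\K}(E\K_+)$. Then $q^\bigstar$ would be onto, $\delta=0$ and $\delta^u=0$, so $v^{-1}$ would lie in the kernel, contradicting (1). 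In particular, your claim that $v^{-1}$ must be paired with $u_{\alpha_0}^2$ or $u_{\alpha_1}^2$ does not follow from what you wrote.

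\textbf{The fix.} The correct image is $\Fz[t_1,u_{\alpha_0},t_0u_{\alpha_0},u_\beta^{\pm},(u_{\alpha_1}/u_{\alpha_0})^{\pm}]$. Inside $\Fz[t_0,t_1][u_{\alpha_0}^{\pm},u_{\alpha_1}^{\pm},u_\beta^{\pm}]$ this is the span of the monomials $t_0^mt_1^nu_{\alpha_0}^au_{\alpha_1}^bu_\beta^c$ with $a+b\ge m$. This is the ``degree condition'' you need: $v^{-1}$ has $a+b=-2$, while $u_{\alpha_0}^2/v=u_{\alpha_0}u_\beta/u_{\alpha_1}$ has $a+b=0$. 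The complement is exactly the $\Sigma^{-1}u_{\alpha_0}^{-1}$-summand of Proposition~\ref{pproj1}(3).

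\textbf{Consequence for Step 2.} The same criterion replaces your ``odd total $u$-degree'' sorting, which is wrong: $u_{\alpha_0}^{-2}$ belongs to the $\Sigma^{-1}u_{\alpha_0}^{-1}$ family. A pure $u$-monomial spans its $RO(\K)$-degree. Its image under $\delta$ is nonzero iff $a+b\le -1$. It lies in $\mathrm{Im}\,q^\bigstar_{\mathrm{tot}}$ iff $a+c\ge 0$ and $b+c\ge 0$, since it equals $v^{-c}u_{\alpha_0}^{a+c}u_{\alpha_1}^{b+c}$. So on pure $u$-monomials the surviving classes are those with $a+b\le -1$ and either $c\le 0$ (the second summand of (2)) or $c\ge 1$ with $a+c<0$ or $b+c<0$ (what the last three summands enumerate).

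\textbf{A caution on your planned disjointness check.} The displayed summands are not literally disjoint. For example, $\Sigma^{-1}\frac{u_\beta}{u_{\alpha_0}^2}\cdot\frac{u_{\alpha_1}}{u_{\alpha_0}}$ and $\Sigma^{-1}\frac{u_\beta u_{\alpha_1}}{u_{\alpha_0}^2}\cdot u_{\alpha_0}^{-1}$ coincide within the third summand. The last summand also contains $\Sigma^{-1}\frac{u_\beta^2}{u_{\alpha_0}^2u_{\alpha_1}^2}=\delta^u(v^{-2})$, which is zero in the cokernel. So that check cannot succeed verbatim; what you can actually establish is the criterion above.
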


\begin{proof}
    $(1)$ Note that for an class $x \in \widetilde{H}^{\bigstar}_{\K}({E_{C_2}\Z/2}_+)$, if there exists another class $y \in \widetilde{H}^{\bigstar}_{\K}(E{\cF_{\subseteq K}}_+)$ such that $q^\bigstar_{\mathrm{tot}}(x)=q^\bigstar(y)$, then $\delta^u{x}=0.$ This follows from the diagram chase. Thus one observe that the classes $a_{\alpha_0},a_{\alpha_1},a_{\beta},
u_{\alpha_0},u_{\alpha_1},u_{\beta},v$ have nonzero class in $\widetilde{H}^{\bigstar}_{\K}(E{\cF_{\subseteq K}}_+)$ such that above condition holds, whereas classes multiple of $\Sigma^{-1}\frac{1}{ a^j_{\beta}u^k_{\beta}}$ maps to zero through $q^\bigstar_{\mathrm{tot}}$, hence they are in the kernel of $\delta^u.$

$(2)$ For the computation of the cokernel, note that class $v^{-1}$ map non-trivially to $\Sigma^{-1}\frac{u_{\beta}}{u_{\alpha_0}u_{\alpha_1}}$, then the above expression follows from the $\widetilde{H}^{\bigstar}_{\K}(S^0)$--module structure of the boundary map $\delta^{u}$. 
\end{proof}


\begin{proposition}\label{prop:surjective map s0 to ep}
Let $\,\gamma \in \RO(\K)$ satisfy $\dim\gamma^{\K}\leq0$. Then the map $q^\gamma \colon \widetilde{H}_{\K}^\gamma(S^0)
\longrightarrow \widetilde{H}_{\K}^\gamma(E\cP_+)$ induced by the collapse map $q \colon E\cP_+ \to S^0$ is surjective.
\end{proposition}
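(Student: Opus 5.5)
We use the cofibre sequence $E\cP_+ \xrightarrow{q} S^0 \to \EPtilde$ and its long exact sequence in $\RO(\K)$-graded Bredon cohomology with $\uFz$-coefficients:
\[
\cdots\to \widetilde{H}^{\gamma}_{\K}(\EPtilde)\to \widetilde{H}^{\gamma}_{\K}(S^0)\xrightarrow{q^\gamma}\widetilde{H}^{\gamma}_{\K}(E\cP_+)\xrightarrow{\partial}\widetilde{H}^{\gamma+1}_{\K}(\EPtilde)\xrightarrow{} \widetilde{H}^{\gamma+1}_{\K}(S^0)\to\cdots.
\]
By exactness, $q^\gamma$ is surjective exactly when the next map $\widetilde{H}^{\gamma+1}_{\K}(\EPtilde)\to \widetilde{H}^{\gamma+1}_{\K}(S^0)$ is injective. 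So the plan is to identify $\widetilde{H}^{\bigstar}_{\K}(\EPtilde)$, understand how it maps to the cohomology of a point, and then use the hypothesis $\dim\gamma^{\K}\le 0$ to show no element of $\widetilde{H}^{\gamma+1}_{\K}(\EPtilde)$ dies.

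\textbf{Step 1.} Since $\EPtilde\wedge H\uFz\simeq H\uFz[a_{\alpha_0}^{\pm1},a_{\alpha_1}^{\pm1},a_\beta^{\pm1}]$, we have $\widetilde{H}^{\bigstar}_{\K}(\EPtilde)\cong\pi_{-\bigstar}(\EPtilde\wedge H\uFz)$, and Lemma~\ref{lgeomfixed} describes this ring. Write $\gamma=m+a\alpha_0+b\alpha_1+c\beta$. Multiplying by units $a_V^{\pm1}$ reduces every class to $a$-translates of monomials in the $x_V=u_V/a_V$. Each $x_V$ lies in integer degree $1$ of $\pi_\ast\Phi^{\K}H\uFz$. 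Consequently a class in $\widetilde{H}^{\gamma+1}_{\K}(\EPtilde)$ is nonzero only when its fixed-point degree satisfies a sign condition. Precisely, the group in cohomological degree $\delta$ is $\pi_{-\dim\delta^{\K}}\Phi^{\K}H\uFz$, which is nonzero only when $\dim\delta^{\K}\le 0$. Taking $\delta=\gamma+1$ gives $\dim(\gamma+1)^{\K}=\dim\gamma^{\K}+1$. So for $\dim\gamma^{\K}\le -1$ the source contains polynomial classes, and for $\dim\gamma^{\K}=0$ it is the degree $-1$ part, which is zero. That settles the boundary case $\dim\gamma^{\K}=0$ outright, because then $\partial=0$. (I will double-check the sign convention here: in the paper $a_V$ has cohomological degree $V$, so $x_V=u_V/a_V$ has degree $-1$, and the classes sit where the fixed-point dimension is $\le 0$.)

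\textbf{Step 2.} For the remaining degrees we show that $\widetilde{H}^{\bigstar}_{\K}(\EPtilde)\to\widetilde{H}^{\bigstar}_{\K}(S^0)$ is injective whenever $\dim\gamma^{\K}<0$. The natural route is to realise the classes $a_{\alpha_0}^{i}a_{\alpha_1}^{j}a_\beta^{k}u$-monomials in $\widetilde{H}^{\bigstar}_{\K}(S^0)$ whenever all exponents of the $a_V$ are nonnegative. The map $\widetilde{H}^\bigstar(\EPtilde)\to\widetilde{H}^\bigstar(S^0)$, composed with the localization $\widetilde{H}^\bigstar(S^0)\to\widetilde{H}^\bigstar(S^0)[a^{-1}]\cong \widetilde{H}^\bigstar(\EPtilde)$, is the identity on $\widetilde{H}^\bigstar(\EPtilde)$. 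So the map is injective in all degrees. This gives $\partial=0$ in every degree, which would be too strong, so the real content must be a degree restriction. Instead, I would argue as follows. Any $y\in\widetilde{H}^{\gamma}_{\K}(E\cP_+)$ with $\partial y\neq 0$ must come, via the diagram \eqref{interdiag} and Lemma~\ref{lem:cohomology of ep}, from negative-cone classes such as $\Sigma^{-1}\frac{1}{a^j u^k}$ and $\Sigma^{-1}u_{\alpha_0}^{-1}(\cdots)$. A degree count then shows that all of these have $\dim\gamma^{\K}\ge 1$, because the factor $a^{-j}$ with $j\ge1$ contributes positive fixed dimension, and the $\Sigma^{-1}$ contributes $+1$.

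\textbf{Step 3 (main obstacle).} The hardest part is the bookkeeping. From Proposition~\ref{main}, Lemma~\ref{lem:cohomology of ep} and Proposition~\ref{prop:image of s0 in ek4}, we must check that every additive generator of $\widetilde{H}^{\bigstar}_{\K}(E\cP_+)$ in degrees with $\dim\gamma^{\K}\le 0$ is a product of $a_V$, $u_V$ and the ratios $\frac{u_{\alpha_0}u_{\alpha_1}}{u_\beta}$ (etc.). We must also check that these ratios lift to $\widetilde{H}^\bigstar_{\K}(S^0)$. I would first compute the fixed-point dimension of each family of generators in the kernel and cokernel lists. The expectation is that the classes with negative $a$-powers, or with a $\Sigma^{-1}$, all have $\dim\gamma^{\K}\ge1$. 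The remaining positive-cone generators are then visibly images of $a_V,u_V$ and $v^{\pm}$-type classes from $S^0$, which gives surjectivity.
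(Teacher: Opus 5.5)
Your proposal has a genuine gap, and it is in Step~1.

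\textbf{The misidentified group.} The long exact sequence involves the \emph{cohomology} $\widetilde{H}^{\bigstar}_{\K}(\EPtilde)=[\EPtilde,\Sigma^{\bigstar}H\uFz]^{\K}$. This is not $\pi_{-\bigstar}(\EPtilde\wedge H\uFz)$, which is the homology of $\EPtilde$, i.e.\ the $a$-localization of Lemma~\ref{lgeomfixed}. Since $\EPtilde\simeq\operatorname{colim}_n S^{n\overline{\rho}_{\K}}$, the cohomology is governed by $F(\EPtilde,H\uFz)\simeq\lim_n S^{-n\overline{\rho}_{\K}}\wedge H\uFz$. It lives in the opposite range of degrees. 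Already for $C_2$, with sign representation $\sigma$, one has $\widetilde{H}^{p+q\sigma}_{C_2}(\widetilde{EC_2};\uFz)\neq 0$ only for $p\ge 2$. By contrast, $\pi_{-\star}(\widetilde{EC_2}\wedge H\uFz)=\Fz[a_\sigma^{\pm1},u_\sigma]$ is concentrated in $p\le 0$.

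\textbf{Consequences for your steps.} Your claim that $\widetilde{H}^{\gamma+1}_{\K}(\EPtilde)$ contains polynomial classes when $\dim\gamma^{\K}\le -1$ is false. Your vanishing at $\dim\gamma^{\K}=0$ is right only by accident. Step~2 then hits the contradiction you flag yourself. Step~3 retreats to a generator-by-generator check that is never carried out. That check would need the full ring $\widetilde{H}^{\bigstar}_{\K}(S^0;\uFz)$, which the paper does not provide. It would also need actual lifts of the classes in Lemma~\ref{lem:cohomology of ep}, which describes $\widetilde{H}^{\bigstar}_{\K}(E\cP_+)$ only through a kernel and a cokernel. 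Its degree count is also off. Each $a_V$ has fixed-point dimension $0$, so negative powers of $a_V$ contribute nothing. What raises $\dim(-)^{\K}$ by one is each factor $u_V^{-1}$ and the shift $\Sigma^{-1}$.

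\textbf{The missing idea.} For $V^{\K}=0$, the class $a_V$ becomes an equivalence after smashing with $\EPtilde$; check this on geometric fixed points. Hence $S^{-\gamma}\wedge\EPtilde\simeq S^{-\dim\gamma^{\K}}\wedge\EPtilde$, so $\widetilde{H}^{\gamma}_{\K}(\EPtilde)\cong\widetilde{H}^{\dim\gamma^{\K}}_{\K}(\EPtilde)$. You had this reduction in spirit but applied it to the wrong group.

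Now run the same long exact sequence in integer degrees:
\begin{itemize}
\item $\widetilde{H}^{n}_{\K}(S^0;\uFz)=0$ for $n\neq 0$;
\item $\widetilde{H}^{n}_{\K}(E\cP_+;\uFz)=0$ for $n<0$;
\item $q^0\colon\Fz\to H^0_{\K}(E\cP;\uFz)\cong\Fz$ is an isomorphism, since $E\cP/\K$ is connected.
\end{itemize}
Exactness then gives $\widetilde{H}^{n}_{\K}(\EPtilde;\uFz)=0$ for all $n\le 1$. So for $\dim\gamma^{\K}\le 0$ the boundary map lands in $\widetilde{H}^{\dim\gamma^{\K}+1}_{\K}(\EPtilde)=0$, and $q^\gamma$ is surjective with no bookkeeping at all. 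This is exactly the paper's argument.
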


\begin{proof}
    We have the cofibre sequence
    \begin{myeq}
        \cdots \longrightarrow \Sigma^{-1}\widetilde{E{\cP}} \longrightarrow E{\cP}_{+}\overset{q}{\longrightarrow} S^0 \longrightarrow \widetilde{E{\cP}}\longrightarrow \cdots
    \end{myeq}
    which induces the following long exact sequence in cohomology.
    \begin{myeq}\label{elxep}
       \cdots \longrightarrow  \hstar[\gamma]{\EPtilde} \longrightarrow \hstar[\gamma]{S^0}\overset{q^\gamma}{\longrightarrow} \hstar[\gamma]{\EPplus}\overset{\partial^\gamma}{\longrightarrow}  \hstar[\gamma+1]{\EPtilde}\longrightarrow \cdots 
    \end{myeq}

    \noindent Since $\EPtilde \simeq S^0[a_{\overline{\rho}_{\K}}^{-1}]$, we have $\hstar[\gamma]{\EPtilde} \cong \widetilde{H}_{\K}^{\dim \gamma^{\K}}(\EPtilde)$. The above long exact sequence at $\gamma=0$ yields 
    \[
    \cdots \hstar[0]{\EPtilde} \longrightarrow \hstar[0]{S^0}\cong\mathbb{F}_2 \overset{q^0}{\longrightarrow} \hstar[0]{\EPplus}\cong  \mathbb{F}_2\overset{\partial^{0}}{\longrightarrow}\hstar[1]{\EPtilde}\longrightarrow \hstar[1]{S^0}\cong 0.
    \]
    The map $q^0$ is an isomorphism, which forces $\hstar[1]{\EPtilde} \cong  0$ and moreover, one verifies $\hstar[n]{\EPtilde} \cong 0$ for all $n\le 0$. Therefore, the long exact sequence \eqref{elxep} at any $\RO(\K)$-degree $\gamma$ satisfying $\dim \gamma^{\K}\leq0$ reads as 
    \[\cdots \longrightarrow  \hstar[\gamma]{S^0}\overset{q^\gamma}{\longrightarrow}  \hstar[\gamma]{\EPplus}\longrightarrow \hstar[\gamma+1]{\EPtilde} \cong 0,
    \]
    whence we infer that the map $q^\gamma$ is a surjection.
\end{proof}
\end{mysubsect}

\sect{Homological purity}

The purpose of this section is to place quaternionic conjugation spaces within the framework of equivariant stable homotopy theory and to relate their defining properties to homological purity. We begin by recalling the definition of quaternionic conjugation spaces and then study their behavior via isotropy separation.

\begin{mysubsect}{Quaternionic conjugation spaces}
Let $X$ be a $\K$-space. We work throughout with cohomology with coefficients in $\uFz$.

\begin{definition}[\text{\cite[Section 2.1]{KKP25}}]
A \emph{quaternionic conjugation space} (or \emph{$\K$-conjugation space}) is a $\K$-space $X$ equipped with the following data:
\begin{enumerate}
    \item An additive isomorphism
    \[
    \kappa \colon H^{4*}(X) \xrightarrow{\;\cong\;} H^*(X^{\K}),
    \]
    which divides degrees by a factor of $4$.
    
    \item An additive section
    \[
    \sigma \colon H^*(X) \longrightarrow H^*_{\K}(X)
    \]
    of the restriction map
    \[
    \rho \colon H^*_{\K}(X) \longrightarrow H^*(X),
    \]
    satisfying the \emph{quaternionic conjugation equation}: for every $x \in H^{4n}(X)$, we have
    \begin{myeq}\label{eq:Q-conjugation}
        r\circ\sigma\bigl(x\bigr)
        =
        \kappa(x)\, p_0^{\,n}
        + \sum_{0< i+j\le n} \kappa_{i, j}(x)p_1^{j}p_0^{n-j-i},
    \end{myeq}
    where
    \[
    r \colon H^*_{\K}(X) \longrightarrow H^*(X^{\K}) \otimes H^*(B\K)
    \]
    is the restriction to fixed points,
    \[
    p_0 = tt'(t+t'), \quad p_1=t^2+tt'+{t'}^{2}\, \in\, H^*(B\K;\Fz)\cong \Fz[t,t']
    \]
    are the degree-$3$, and degree-$2$ Dickson invariants, respectively, and 
    \[
    \kappa_{i,j}(x)\in H^{n+3i+j}(X^{\K}).
    \]
\end{enumerate}
\end{definition}
\medskip

\noindent Such a collection of data is called a $\mathcal{Q}$-frame in \cite{KKP25}. It relates the ordinary cohomology of $X$, its fixed-point space $X^{\K}$, and its Borel equivariant cohomology. From a stable equivariant viewpoint, this structure is expected to reflect a strong decomposition property, analogous to the homological purity condition in the $C_2$-case.
\begin{remark}
As in the classical case of conjugation spaces, the maps $\kappa$ and $\sigma$ are in fact multiplicative, and satisfy naturality with respect to $\K$-equivariant maps. Furthermore, Kryazhev, Kuznetsov, and Popelensky (\cite{KKP25}) show that the values of the coefficients $\kappa_{i,j}(x)\in H^{n+3i+j}(X^{\K})$ in the conjugation equation \eqref{eq:Q-conjugation} are completely determined by the value of the degree quartering map $\kappa(x)$. Precisely, we have 
    \[
    \kappa_{i,j}(x) = Sq(j,i)(\kappa(x))
    \]
    where $Sq(j,i)$ are elements of the Milnor basis for the Steenrod algebra $\mathcal{A}_2$. This observation is in harmony with the classical case (for a $C_2$-conjugation space), where the expanded conjugation equation reads as
    \[
     r\bigl(\sigma(x)\bigr)
        =
        \kappa(x)t^{\,n}
        + \sum_{i=1}^n (Sq^i\kappa(x))t^{n-i}
    ,\]
    as established by Hausmann, Holm, and Puppe (\cite{HHP05}).
\end{remark}
\end{mysubsect}
\medskip

\begin{mysubsect}{The isotropy separation diagrams}
Let \(X\) be a \({\K}\)-space. The inclusion map of families $\{e\}\xhookrightarrow{}\mathcal P$ induces a map between cofibre sequences of the corresponding universal spaces.
\[
\xymatrix{
\Sigma^{-1}\EPtilde\ar[r]  &   \EPplus\ar[r] & S^0\ar[r]  &  \EPtilde \\
\Sigma^{-1}\widetilde{E\K}\ar[r]\ar[u]  &   E{\K}_+\ar[r]\ar[u] & S^0\ar[r]\ar@{=}[u]  &  \widetilde{E\K}\ar[u] \\
}
\]

\noindent Smashing the above diagram with the inclusion of fixed points $X^{\K}\hookrightarrow X$, and applying \(\RO({\K})\)-graded Bredon cohomology in \(\uFz\) coefficients, we obtain the following comparison diagram

\begin{myeq}\xymatrix@C=-1pt@R=15pt{
  &  & \hstar{X^{\K}}\ar[dl]\ar[dr]&&\\
  & \hstar{E{\K}_+\wedge X^{\K}}\ar[dl] & & \hstar{\EPplus\wedge X^{\K}}\ar[ll]\ar[dr]&\\
  \hstar[\bigstar+1]{\widetilde{E\K}\wedge X^{\K}} &  &  \hstar{X}\ar'[u][uu]\ar[dl]\ar[dr] &  &  \hstar[\bigstar+1]{\EPtilde\wedge X^{\K}}\\
  & \hstar{E{\K}_+\wedge X}\ar[dl]\ar[uu] & & \hstar{\EPplus\wedge X}\ar[ll]\ar[dr]\ar[uu]&\\
  \hstar[\bigstar+1]{\widetilde{E\K}\wedge X}\ar[uu] &  &  &  &  \hstar[\bigstar+1]{\EPtilde\wedge X}\ar[uu]
  }
\end{myeq}

\medskip

\noindent where the rows along the slanted arrows are exact. Using that (\cite[Lemma 13]{Car99}) for a space $B$ with trivial $\K$-action one has
\[
\hstar{A\wedge B} \cong \hstar{A}\otimes \widetilde{H}^*(B),
\]
 together with the equivalence $\EPtilde\wedge X\simeq \EPtilde\wedge X^{\K}$, the diagram simplifies to

\begin{myeq}\label{eq:main}
    \xymatrix@C=-17pt@R+=2em{
  &  & \hstar{S^0}\otimes \widetilde{H}^*(X^{\K})\ar[dl]_{j_1^\bigstar}\ar[dr]^{j_2^\bigstar}&&\\
  & \hstar{E{\K}_+}\otimes \widetilde{H}^*(X^{\K})\ar[dl] & \phantom{pp}& \hstar{\EPplus}\otimes \widetilde{H}^*(X^{\K})\ar[ll]^>(0.8){{g}^\bigstar}\ar[dr]&\\
  \hstar[\bigstar+1]{\widetilde{E\K}\wedge X^{\K}} &  &  \hstar{X}\ar'[u][uu]_{\kappa^\bigstar}\ar[dl]\ar[dr] &  &  \hstar[\bigstar+1]{\EPtilde\wedge X^{\K}}\\
  & \hstar{\EKplus\wedge X}\ar[dl]\ar[uu]^{r_1} & & \hstar{\EPplus\wedge X}\ar[ll]\ar[dr]\ar[uu]^{r_2}&\\
  \hstar[\bigstar+1]{\EKtilde\wedge X}\ar[uu] &  &  &  &  \hstar[\bigstar+1]{\EPtilde\wedge X}\ar[uu]^\cong,
  }
\end{myeq}

\noindent at the heart of which is the map
\[
\hstar{X}\longrightarrow\hstar{\EPplus\wedge X}\longrightarrow\hstar{\EKplus\wedge X}.
\]
 Passing to the underlying nonequivariant cohomology for this map, we obtain:

\[
\xymatrix@R=15pt{
    & \hstar{X}\ar[dl]\ar[dr]\ar'[d][dd]^<<{\rho_X}\\
    \hstar{\EKplus\wedge X}\ar[dd]^{\rho}    &&    \hstar{\EPplus\wedge X}\ar[dd]\ar[ll] \\
    & \widetilde{H}^{|\bigstar|}(X)[u_V^\pm] \ar[dl]_\cong\ar[dr]^\cong\\
    \widetilde{H}^{|\bigstar|}(X)[u_V^\pm] && \widetilde{H}^{|\bigstar|}(X)[u_V^\pm]\ar[ll]^\cong
}
\]
\medskip\\
where $\widetilde{H}^{|\bigstar|}(X)[u_V^\pm]$ denotes the localization $\widetilde{H}^{|\bigstar|}(X)\otimes\Fz[u_{\alpha_0}^\pm,u_{\alpha_1}^\pm,u_{\beta}^\pm]$. Observe that since $\EKplus\wedge X$ is a free \K-space, we have 
\[
\hstar{\EKplus\wedge X}\cong \widetilde{H}^{|\bigstar|}(\EKplus\wedge_{\K}X)[u_V^\pm].
\]

\noindent If $X$ is equipped with a $\mathcal{Q}$-frame, then we have a $\Fz[u_{\alpha_0}^\pm,u_{\alpha_1}^\pm,u_{\beta}^\pm]$--module map 
\begin{myeq}\label{eq:sigma}
\sigma:\widetilde{H}^{|\bigstar|}(X)[u_V^\pm]\longrightarrow \widetilde{H}^{|\bigstar|}(\EKplus\wedge_{\K}X)[u_V^\pm],
\end{myeq}

\noindent which is a section for the restriction map $\rho$, and which when composed with the map $r_1$ in diagram \eqref{eq:main} reproduces the conjugation equation within the $\RO(\K)$-graded module $\hstar{\EKplus}\otimes\widetilde{H}^*(X^{\K})$.

\begin{lemma}\label{llifting}
Let $X$ be a $\K$-conjugation space, equipped with a map $\sigma'$, which lifts the section $\sigma$ in equation \eqref{eq:sigma} along the map $\hstar{\EPplus\wedge X}\longrightarrow\hstar{\EKplus\wedge X}$. Then for every class
$x\in \widetilde{H}^{4n}(X),$ there exists a class
\[
\widetilde{x}\in \hstar[n\rho_{\K}]{X;\uFz}
\]
such that
\[
\rho_X(\widetilde{x})
=
(u_{\alpha_0}u_{\alpha_1}u_{\beta})^{n}x
\in
\widetilde{H}^*(X)\bigl[u_{\alpha_0}^{\pm1},u_{\alpha_1}^{\pm1},u_{\beta}^{\pm1}\bigr].
\]
\end{lemma}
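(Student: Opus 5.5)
The plan mirrors the argument of Pitsch--Ricka--Scherer in the $C_2$ case. We push the class $\sigma'(x)$ from $E\cP_+\wedge X$ up to $X$ through the long exact sequence of the cofibre sequence $E\cP_+\wedge X\to X\to \EPtilde\wedge X$, working in the degree $n\rho_{\K}$.

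First fix $x\in \widetilde{H}^{4n}(X)$. Under the identification $\hstar{\EKplus\wedge X}\cong \widetilde{H}^{|\bigstar|}(\EKplus\wedge_{\K}X)[u_V^\pm]$, the section $\sigma$ of \eqref{eq:sigma} is $\Fz[u_V^\pm]$-linear. So the element $(u_{\alpha_0}u_{\alpha_1}u_\beta)^n\sigma(x)$ makes sense in $RO(\K)$-degree $4n+(\alpha_0-1+\alpha_1-1+\beta-1)n=n\rho_{\K}$, and it is a lift of $(u_{\alpha_0}u_{\alpha_1}u_\beta)^n x$ along $\rho$. By hypothesis it lifts further along $\hstar{\EPplus\wedge X}\to\hstar{\EKplus\wedge X}$ to a class
\[
y:=\sigma'\bigl((u_{\alpha_0}u_{\alpha_1}u_\beta)^n x\bigr)\in \hstar[n\rho_{\K}]{\EPplus\wedge X}.
\]
The underlying triangle (the last diagram before the lemma) commutes, and all the maps to $\widetilde{H}^{|\bigstar|}(X)[u_V^\pm]$ are isomorphisms compatible with restriction. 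Hence $y$ restricts to $(u_{\alpha_0}u_{\alpha_1}u_\beta)^n x$ underlying.

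Next we show that $y$ lies in the image of $\hstar[n\rho_{\K}]{X}\to\hstar[n\rho_{\K}]{\EPplus\wedge X}$. By exactness of the slanted row in \eqref{eq:main}, it suffices that the connecting map sends $y$ to zero in
\[
\hstar[n\rho_{\K}+1]{\EPtilde\wedge X}\cong \hstar[n\rho_{\K}+1]{\EPtilde\wedge X^{\K}}.
\]
Since $\EPtilde\simeq S^0[a_{\bar\rho_{\K}}^{-1}]$, this group is identified with $\widetilde{H}^{\dim(n\rho_{\K}+1)^{\K}}(\EPtilde)\otimes\widetilde{H}^*(X^{\K})$. The $\K$-fixed dimension of $n\rho_{\K}+1$ is $n+1$, so a degree count alone will not kill it. Instead we use the right-hand side of \eqref{eq:main} and compare with $r_2$. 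Via $r_2$, the class $y$ maps into $\hstar{\EPplus}\otimes\widetilde{H}^*(X^{\K})$, and its further image under $g^\bigstar$ is $r_1$ of the lifted section. By the conjugation equation \eqref{eq:Q-conjugation}, that image equals
\[
(u_{\alpha_0}u_{\alpha_1}u_\beta)^n\Bigl(\kappa(x)p_0^n+\textstyle\sum\kappa_{i,j}(x)p_1^jp_0^{n-i-j}\Bigr).
\]
In the $RO(\K)$-grading, $u_{\alpha_0}u_{\alpha_1}u_\beta\,p_0$ is $t_0t_1(t_0+t_1)u_{\alpha_0}u_{\alpha_1}u_\beta=a_{\alpha_0}a_{\alpha_1}a_\beta=a_{\bar\rho_{\K}}$, using Proposition~\ref{pproj1} and the formulas for $\pi^\bigstar(a_V)$. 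Similarly, $p_1$ times suitable $u$-monomials lies in the image of $\pi^\bigstar$ (Proposition~\ref{prop:image of s0 in ek4}). The key point is then that each summand is the image of a class of $\hstar{S^0}\otimes\widetilde{H}^*(X^{\K})$ in degree $n\rho_{\K}$, namely monomials in the $a_V$ and $u_V$ tensored with $\kappa_{i,j}(x)$. Its component in each tensor factor has fixed dimension $\le 0$, so Proposition~\ref{prop:surjective map s0 to ep} applies to the $\hstar{\EPplus}$ factor there.

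We then argue that $r_2(y)$ itself comes from $\hstar{S^0}\otimes\widetilde{H}^*(X^{\K})$ via $j_2^\bigstar$. Then its image in $\hstar[\bigstar+1]{\EPtilde\wedge X^{\K}}$ vanishes by exactness of the upper right row. Since the right vertical map is an isomorphism, the boundary of $y$ vanishes too, giving $\widetilde x\in\hstar[n\rho_{\K}]{X}$ mapping to $y$. Its underlying class is $\rho_X(\widetilde x)=(u_{\alpha_0}u_{\alpha_1}u_\beta)^nx$ by the commutative underlying triangle.

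The main obstacle is this last step: passing from $g^\bigstar(r_2(y))$ to $r_2(y)$ itself. The map $g^\bigstar$ need not be injective, because of the $\Sigma^{-1}$-type torsion classes in Lemma~\ref{lem:cohomology of ep}. The first thing I would try is to show that, in the degrees $n\rho_{\K}-k$ relevant for $\widetilde{H}^{k}(X^{\K})$, $k\le n+3n$, the kernel of $g^\bigstar$ vanishes. Such degrees have nonpositive fixed dimension after tensoring, and there $\hstar{\EPplus}$ is a quotient of $\hstar{S^0}$. Failing that, I would modify $\widetilde x$ by such kernel elements, which all lift from $S^0$.
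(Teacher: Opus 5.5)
Your architecture matches the paper's: set $y=\sigma'\bigl((u_{\alpha_0}u_{\alpha_1}u_\beta)^n x\bigr)$, show that $r_2(y)$ lies in the image of $j_2^\bigstar$ so that the boundary of $y$ in $\hstar[n\rho_{\K}+1]{\EPtilde\wedge X}$ vanishes, then lift to $X$. The gap is that the step carrying all the content is never proved. You only control $g^\bigstar(r_2(y))$ and then say you will argue that $r_2(y)$ itself comes from $j_2^\bigstar$. Your degree bookkeeping also hides where the difficulty sits. The boundary lands in $\bigoplus_k \hstar[n\rho_{\K}+1-k]{\EPtilde}\otimes\widetilde H^k(X^{\K})\cong\bigoplus_k \hstar[n+1-k]{\EPtilde}\otimes\widetilde H^k(X^{\K})$; you dropped the shift by $k$. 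Since $\hstar[j]{\EPtilde}=0$ for $j\le 1$, every summand with $k\ge n$ vanishes. Equivalently, the components of $r_2(y)$ in $\hstar[n\rho_{\K}-k]{\EPplus}\otimes\widetilde H^k(X^{\K})$ with $k\ge n$ have fixed dimension $n-k\le 0$. Proposition~\ref{prop:surjective map s0 to ep} then puts them in the image of $j_2^\bigstar$ directly, with no need for $g^\bigstar$ or the conjugation equation. What actually remains is the components with $0\le k<n$. These sit in degrees of fixed dimension $n-k>0$, where surjectivity from $S^0$ is not available.

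Neither of your proposed repairs addresses those components. Injectivity of $g^\bigstar$ in the nonpositive-fixed-dimension degrees is irrelevant, since surjectivity already handles them. What would help is injectivity of $g^\bigstar$ in the degrees $n\rho_{\K}-k$ with $k<n$: the right-hand side of the conjugation equation has no components with $k<n$, so this would force the bad components of $r_2(y)$ to vanish. Your fallback of modifying by kernel elements that lift from $S^0$ cannot work either. Such changes lie in the image of $j_2^\bigstar$ and so do not alter the boundary, and $\widetilde x$ does not yet exist at that stage. The paper closes the step by recording that $r_2(y)$ lies in $\bigoplus_{i=0}^{3n}\hstar[n\bar\rho_{\K}-i]{\EPplus}\otimes\widetilde H^{n+i}(X^{\K})$. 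It then applies Proposition~\ref{prop:surjective map s0 to ep} summand by summand, never passing through $g^\bigstar$. Note that the paper justifies this support statement only \emph{by construction} of $\sigma'$. It is exactly the vanishing of the $k<n$ components, and your proof must supply it, for instance via the injectivity above or by building it into the choice of $\sigma'$.
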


\begin{proof}
Let \(x\in H^{4n}(X)\). Since \(X\) is a $\K$-conjugation space, we have the conjugation equation
\begin{align*}
    r_1\circ\sigma\big((u_{\alpha_0}u_{\alpha_1}u_{\beta})^{n}x\big) &= (u_{\alpha_0}u_{\alpha_1}u_{\beta})^{n}\:r_1\circ\sigma(x)\\
    &= \kappa(x)\,(u_{\alpha_0}u_{\alpha_1}u_{\beta})^{n} p_0^{\,n}
        + \sum_{0< i+j\le n} \kappa_{i, j}(x)(u_{\alpha_0}u_{\alpha_1}u_{\beta})^{n}p_1^{j}p_0^{n-j-i}
\end{align*}
where\[p_0 = t_0t_1(t_0+t_1), \qquad p_1=t_0^2+t_1^2+t_1t_2= (t_0+t_1)t_0 + (t_0+t_1)t_1+t_0t_1.
\]
Using the convention
$
t_0=a_{\alpha_0}u_{\alpha_0}^{-1},
\quad
t_1=a_{\beta}u_{\beta}^{-1},
\quad
t_0+t_1=a_{\alpha_1}u_{\alpha_1}^{-1},
\,$
we obtain
\[
(u_{\alpha_0}u_{\alpha_1}u_{\beta})p_0
=a_{\alpha_0}a_{\alpha_1}a_{\beta}, \quad\text{and}\quad 
 (u_{\alpha_0}u_{\alpha_1}u_{\beta})p_1
=a_{\alpha_0}a_{\alpha_1}u_{\beta}+a_{\alpha_1}a_{\beta}u_{\alpha_0}+a_{\alpha_0}a_{\beta}u_{\beta}.
\]
Observe that the above terms are monomials in $\Fz[a_{\alpha_0},a_{\alpha_1},a_{\beta},u_{\alpha_0},u_{\alpha_1},u_{\beta}]$.
Therefore, we have
\begin{align*}
    (u_{\alpha_0}u_{\alpha_1}u_{\beta})^{n}\:r_1\circ\sigma(x) \:&=\: \kappa(x)\big(u_{\alpha_0}u_{\alpha_1}u_{\beta}\, p_0\big)^{n}
        +\\& \sum_{0< i+j\le n} \kappa_{i, j}(x)\big(u_{\alpha_0}u_{\alpha_1}u_{\beta}\,p_1\big)^{j}\big(u_{\alpha_0}u_{\alpha_1}u_{\beta}\,p_0\big)^{n-j-i}\big(u_{\alpha_0}u_{\alpha_1}u_{\beta}\big)^{n-i}
\end{align*}
which clearly belongs to the image of $j_1^\bigstar$ in $\hstar{\EKplus}\otimes\widetilde{H}^*(X^{\K})$ by Proposition \ref{prop:image of s0 in ek4}.
Furthermore, by assumption, the term $r_1\circ\,\sigma\big((u_{\alpha_0}u_{\alpha_1}u_{\beta})^{n}x\big)$ has a preimage $r_2\circ\sigma'\big((u_{\alpha_0}u_{\alpha_1}u_{\beta})^{n}x\big)$ in $\hstar{\EPplus}\otimes\widetilde{H}^*(X^{\K})$, which, by construction, is an element of the submodule 
\[
\bigoplus_{i=0}^{3n} \hstar[n\overline{\rho}_{\K}-\:i]{\EPplus}\otimes\widetilde{H}^{n+i}(X^{\K}) \: \subseteq\: \hstar{\EPplus}\otimes\widetilde{H}^*(X^{\K}).
\] 
 By Proposition \ref{prop:surjective map s0 to ep}, the map $\hstar[n\overline{\rho}_{\K}-i]{S^0}\longrightarrow \hstar[n\overline{\rho}_{\K}-i]{\EPplus}$ is surjective for all $i\geq 0$. Therefore, the element $r_2\circ\sigma'\big((u_{\alpha_0}u_{\alpha_1}u_{\beta})^{n}x\big)$ lies in the image of $j_2^\bigstar$, which (from diagram \eqref{eq:main}) implies that $\sigma'\big((u_{\alpha_0}u_{\alpha_1}u_{\beta})^{n}x\big)$ has a preimage $\tilde{x}$ in $\hstar[n\rho_{\K}]{X}$. Evidently, this satisfies
\[
\rho_X(\tilde{x}) = \rho\circ\sigma\big((u_{\alpha_0}u_{\alpha_1}u_{\beta})^{n}x\big)= (u_{\alpha_0}u_{\alpha_1}u_{\beta})^{n}x .
\]
Hence the result follows.
\end{proof}


\begin{lemma}\label{lrest}
Let $X$ be a $\K$-conjugation space, then for any order $2$ subgroup $L \le \K$, the restriction $\mathrm{res}_{L}(X)$ is a $C_2$-conjugation space.
\end{lemma}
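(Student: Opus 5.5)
We aim to build an explicit conjugation frame for $\mathrm{res}_L(X)$ out of the $\mathcal{Q}$-frame of $X$. Write $L=\ker(\chi)$ for a nontrivial character $\chi\in\{\alpha_0,\alpha_1,\beta\}$ of $\K$. A $C_2$-conjugation frame for $Y:=\mathrm{res}_L X$ needs three things: a degree-halving isomorphism $\kappa_L\colon H^{2*}(Y)\to H^*(Y^L)$, a section $\sigma_L$ of the restriction $H^*_{L}(Y)\to H^*(Y)$, and the conjugation equation $r_L\sigma_L(x)=\kappa_L(x)t^{m}+\text{lower terms in }t$ for $x\in H^{2m}(Y)$.

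The section is the easy part. We set $\sigma_L:=\res^{\K}_L\circ\sigma$, where $\res^{\K}_L\colon H^*_{\K}(X)\to H^*_L(X)$ is induced by $EL\to E\K$. Since restriction to the underlying space factors through $H^*_L$, this $\sigma_L$ is again a section. Restricting the quaternionic conjugation equation along $B L\to B\K$ then gives
\[
r'\sigma_L(x)=\kappa(x)\,\bar p_0^{\,n}+\sum\kappa_{i,j}(x)\,\bar p_1^{\,j}\bar p_0^{\,n-i-j},\qquad r'\colon H^*_L(X)\to H^*(X^{\K})\otimes H^*(BL).
\]
Here $\bar p_0,\bar p_1$ are the images of the Dickson invariants in $H^*(BL)=\Fz[t]$. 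Now $p_0=tt'(t+t')$ vanishes on restriction to any order-$2$ subgroup, while $p_1$ restricts to $t^2$. The leading term therefore dies, and we must instead use the intermediate fixed points $X^L\supseteq X^{\K}$.

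The main step is to produce $\kappa_L$. I would use the residual $\K/L\cong C_2$-action on $X^L$. The plan is to show that $X^L$ is itself a $C_2$-conjugation space with fixed points $X^{\K}$, and that $H^{2*}(X)\cong H^*(X^L)$. For this I would use the Borel localization theorem for $L$ acting on $X$: $H^*_L(X)[t^{-1}]\cong H^*(X^L)\otimes\Fz[t^{\pm}]$. I would combine this with equivariant formality, since $\sigma_L$ exists and forces the Serre spectral sequence to collapse, so $H^*_L(X)\cong H^*(X)\otimes\Fz[t]$. Together these give $\dim H^*(X^L)=\dim H^*(X)$, Smith-inequality sharp. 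Applying the same argument to $C_2=\K/L$ acting on $X^L$, and using $\dim H^*(X)=\dim H^*(X^{\K})$ from $\kappa$, gives $\dim H^*(X^L)=\dim H^*(X^{\K})$ as well.

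Degree control comes from the restricted equation. There $\kappa_{n-j,j}(x)\bar p_1^{\,j}$ survives only when $i+j=n$, and the top $t$-power term is $\kappa_{0,n}(x)t^{2n}=Sq(n,0)\kappa(x)\,t^{2n}$. I would therefore define $\kappa_L(x)$ for $x\in H^{4n}(X)$ as the component of $r_L\sigma_L(x)\in H^*(X^L)\otimes\Fz[t]$ of lowest $X^L$-degree; this component lives in $H^{2n}(X^L)$. I would then check that its image in $H^*(X^{\K})$ under the $C_2$-equation of $X^L$ reproduces the restricted formula. The expected structure is that the quaternionic equation factors as a composite of two $C_2$-conjugation equations, $\mathrm{Sq}(n,0)=Sq^{2n}\cdots$ up to the halving.

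The obstacle I expect is the additive injectivity/surjectivity of $\kappa_L$ and its definition on degrees $\equiv 2\pmod 4$. I would handle this by a filtration argument in the localized $H^*_L(X)[t^{-1}]$: the leading terms of $r_L\sigma_L$ on a basis of $H^*(X)$ are triangular, and the dimension equality above then forces them to be a basis of $H^*(X^L)$. Alternatively, if the paper allows it, I would simply invoke \cite{KKP25}, where restriction to order-$2$ subgroups is likely treated.
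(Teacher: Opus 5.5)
\textbf{Verdict: the direct construction has a genuine gap at both of its key steps.} Your section $\sigma_L=\res^{\K}_L\circ\sigma$ is the right one; it is the paper's $\sigma_1=p^*\sigma$. Your fallback is exactly the paper's proof. The paper takes the $H$-frame $(\sigma_1,\kappa_1)$ from \cite[\S4.1, Prop.~4.3]{KKP25}. It then gets bijectivity of $\kappa_1$ from the factorization $\kappa_{\K}=\kappa_{23}\circ\kappa_1$ of \cite[Prop.~4.4]{KKP25}, where $\kappa_{23}$ is the frame isomorphism of $X^{L}$ for the residual involution.

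\textbf{First gap: degree control.} The map $r'$ factors through $H^*(X^L)[t]\to H^*(X^{\K})[t]$, which is induced by the nonequivariant restriction $H^*(X^L)\to H^*(X^{\K})$. So the restricted equation cannot see any component of $r_L\sigma_L(x)$ that vanishes on $X^{\K}$. Take $X=\mathbb{HP}^n$ with $n\ge 1$, so $X^L\cong\mathbb{CP}^n$ and $X^{\K}\cong\mathbb{RP}^n$, and let $x$ be the top class. Every surviving coefficient $\kappa_{n-j,j}(x)\in H^{4n-2j}(\mathbb{RP}^n)$ is zero, so the restricted equation reads $0=0$. Nothing in your argument then places the lowest component of $r_L\sigma_L(x)$ in $H^{2n}(X^L)$.

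A repair is to keep the residual variable. Split $\K=L\times C$, so that $H^*(B\K)=\Fz[t_L,s]$ with $p_0=st_L(s+t_L)$ and $p_1=s^2+st_L+t_L^2$, and $H^*_{\K}(X^L)\cong\Fz[t_L]\otimes H^*_C(X^L)$. Your count $\dim H^*(X^L)=\dim H^*(X^{\K})$ makes $X^L$ $C$-equivariantly formal. Hence $H^*_C(X^L)\to H^*(X^{\K})[s]$ is injective. The $\mathcal{Q}$-equation has $t_L$-degree at most $2n$. Writing $\sigma(x)|_{X^L}=\sum_k t_L^k c_k(x)$, injectivity forces $c_k(x)=0$ for $k>2n$. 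The class $c(x):=c_{2n}(x)\in H^{2n}_C(X^L)$ restricts to $\kappa(x)s^n+\sum_{j\ge 1}\kappa_{0,j}(x)s^{n-j}$, and $\kappa_L(x)$ is its underlying nonequivariant class.

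\textbf{Second gap: bijectivity of $\kappa_L$.} An argument that uses only $H^*_L(X)[t^{-1}]$ and total dimensions cannot work. Take $S^4$ with a reflection, whose fixed set is $S^3$. It is $L$-maximal, its cohomology sits in degrees $0$ and $4$, and its Thom class restricts to $u_3t$. The $t^2$-coefficient lies in $H^2(S^3)=0$, so no degree-halving isomorphism exists, even though every $L$-side input you use holds.

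The residual data must enter degree by degree. The leading terms $\kappa(x_i)s^{n_i}$, in lowest $X^{\K}$-degree, make the classes $c(x_i)$ linearly independent over $\Fz[s]$. This gives $\#\{i:2n_i\le d\}\le\dim H^{\le d}(X^L)$ for every $d$. For the reverse inequality, use that $\{r_L\sigma_L(x_i)\}$ is an $\Fz[t^{\pm1}]$-basis of $H^*(X^L)[t^{\pm1}]$ with $r_L\sigma_L(x_i)\in H^{\ge 2n_i}(X^L)[t^{\pm1}]$.

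Equality in every degree makes $\{c(x_i)\}$ an $\Fz[s]$-basis of $H^*_C(X^L)$. Hence $\{\kappa_L(x_i)\}$ is a basis of $H^*(X^L)$ and $H^{\mathrm{odd}}(X^L)=0$. This is the content the paper imports from \cite{KKP25}. Your worry about degrees $\equiv 2 \pmod 4$ is moot, since $H^*(X)$ is concentrated in degrees divisible by $4$.
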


\begin{proof}
    It suffices to treat $L=\langle \tau_1\rangle$, since the other cases are
identical. By the discussion in \cite[\S4.1]{KKP25}, if
$X$ is $\mathcal{Q}$-framed, then $X$, regarded only as a $L$-space, carries
an $H$-frame $(\sigma_1,\kappa_1)$.

Let
\[
p \colon E{\K}\times_{\K} X \longrightarrow E\langle\tau_1\rangle\times_{\tau_1} X
\]
be the natural map of Borel constructions. Then
\[
p^*\circ \sigma_{\K}=\sigma_1
\]
by \cite[Proposition~4.3]{KKP25}, and
\[
\kappa_{\K}=\kappa_{23}\circ \kappa_1
\]
by \cite[Proposition~4.4]{KKP25}, where $\kappa_{23}$ is the $H$-frame isomorphism on
$X^{\tau_1}$ with respect to the residual involution
\[
\tau_{23}=\tau_2|_{X^{\tau_1}}=\tau_3|_{X^{\tau_1}}.
\]

Since $\kappa_{\K}$ and $\kappa_{23}$ are isomorphisms, $\kappa_1$ is an
isomorphism as well. Hence $(\sigma_1,\kappa_1)$ is an $H$-frame on $X$
for the involution $\tau_1$, so $\mathrm{res}^{\K}_L X$ is a $C_2$-conjugation space.

The same argument applies to any order-two subgroup $L\le {\K}$.
\end{proof}

\begin{theorem}\label{main1}
    Let $X$ be a finite type $\K$-conjugation space, satisfying the assumptions of Lemma \ref{llifting}. Then $X$ is homologically pure, i.e. as a $H\uFz$-module, there exists a decomposition
    \[
    X\wedge H\uFz \simeq \bigvee_{i} S^{n_i\rho_{{\K}}}\wedge H\uFz.
    \]
\end{theorem}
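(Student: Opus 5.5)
Following the strategy of Pitsch--Ricka--Scherer for $C_2$, we build a comparison map from a wedge of representation spheres and then show it is an equivalence by checking underlying and fixed-point information. Choose a homogeneous $\Fz$-basis $\{x_i\}$ of $\widetilde{H}^*(X)$. This basis is concentrated in degrees divisible by $4$, because $\kappa$ is an isomorphism $H^{4*}(X)\cong H^*(X^{\K})$ and the odd-degree parts must vanish. This vanishing follows by comparing total dimensions via the Smith inequality, or directly from the $\mathcal Q$-frame data in \cite{KKP25}. Write $|x_i|=4n_i$. Lemma \ref{llifting} then gives equivariant lifts $\widetilde{x}_i\in \hstar[n_i\rho_{\K}]{X}$ with $\rho_X(\widetilde{x}_i)=(u_{\alpha_0}u_{\alpha_1}u_\beta)^{n_i}x_i$. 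Each $\widetilde{x}_i$ is a map $X\to \Sigma^{n_i\rho_{\K}}H\uFz$ of $\K$-spectra. By adjunction it becomes an $H\uFz$-module map $X\wedge H\uFz\to S^{n_i\rho_{\K}}\wedge H\uFz$. Since $X$ is of finite type, only finitely many $x_i$ occur in each degree, so these maps assemble into a map
\[
\Theta\colon X\wedge H\uFz\longrightarrow \prod_i S^{n_i\rho_{\K}}\wedge H\uFz\simeq \bigvee_i S^{n_i\rho_{\K}}\wedge H\uFz .
\]
We show $\Theta$ is an equivalence.

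An $H\uFz$-module map between $\K$-spectra is an equivalence iff it induces equivalences on geometric fixed points $\Phi^L$ for every subgroup $L\le\K$. We verify this subgroup by subgroup.

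For $L=e$, the map $\Theta$ restricts to the nonequivariant map $X\wedge H\Fz\to\bigvee_i \Sigma^{4n_i}H\Fz$ classified by the $\rho_X(\widetilde{x}_i)$. Since the $u_V$ are units underlying, this is given by the basis $\{x_i\}$ itself, so it is an equivalence.

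For the order-two subgroups $L$, Lemma \ref{lrest} says $\res_L X$ is a $C_2$-conjugation space. The restricted classes $\res_L\widetilde{x}_i$ lie in degree $2n_i\rho_{C_2}$, and their underlying classes are units times the $x_i$. The argument of \cite{PRS21} (conjugation implies purity, with the purity map detected by any such lifts) then shows $\res_L\Theta$ is an equivalence. In particular $\Phi^L\Theta$ is an equivalence.

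The main obstacle is $L=\K$. Here $\Phi^{\K}\Theta\colon X^{\K}\wedge\Phi^{\K}H\uFz\to\bigvee_i\Sigma^{n_i}\Phi^{\K}H\uFz$ is a map of free modules over $\Phi^{\K}H\uFz$, whose homotopy is $R=\Fz[x_{\alpha_0},x_{\alpha_1},x_\beta]/(\text{relation})$ by Lemma \ref{lgeomfixed}. It suffices to show that $\Phi^{\K}(\widetilde{x}_i)$, viewed in $H^*(X^{\K})\otimes R$, reduces modulo the augmentation ideal of $R$ to $\kappa(x_i)$. Granting this, a Nakayama-type argument over the connected graded ring $R$, using finite type, makes $\Theta$ an isomorphism on homotopy. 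This reduction should be read off from diagram \eqref{eq:main} and the computation in the proof of Lemma \ref{llifting}. The image of $\widetilde{x}_i$ in $\hstar{S^0}\otimes\widetilde{H}^*(X^{\K})$ is
\[
\kappa(x_i)(a_{\alpha_0}a_{\alpha_1}a_\beta)^{n_i}+\sum\kappa_{i,j}(x_i)(\cdots).
\]
Inverting the $a_V$ (passing to $\EPtilde$) and dividing by $a_{\rho}^{n_i}$ sends the leading term to $\kappa(x_i)$. Each correction term involves $(u_{\alpha_0}u_{\alpha_1}u_\beta)^{n-i}$ or $p_1$ factors, which become positive-degree monomials in the $x_V=u_V/a_V$. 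These lie in the augmentation ideal of $R$ whenever $i+j>0$.

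The delicate point is that the equation holds in $\hstar{\EKplus}\otimes H^*(X^{\K})$ only after applying $j_1^\bigstar$. Injectivity problems are controlled by the $\EPplus$ lift $\sigma'$ and by Proposition \ref{prop:surjective map s0 to ep}. I expect to argue that elements of $\hstar[n\rho-*]{S^0}$ in the relevant degrees are determined by their images in $\EPtilde$ and $\EKplus$ up to the needed precision. Since $\kappa$ is an isomorphism, the leading-term map is bijective, and so $\Phi^{\K}\Theta$ is an equivalence, completing the proof.
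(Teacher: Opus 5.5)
Your argument follows the paper's proof. You take lifts $\widetilde{x}_i$ from Lemma~\ref{llifting} and assemble them into an $H\uFz$-module map to $\bigvee_i S^{n_i\rho_{\K}}\wedge H\uFz$. You detect equivalence on geometric fixed points, using Lemma~\ref{lrest} for the order-two subgroups. For $\Phi^{\K}$ you run Nakayama over $R=\pi_\ast\Phi^{\K}(H\uFz)$, fed by the claim that the leading term of $\Phi^{\K}(\widetilde{x}_i)$ is $\kappa(x_i)$. In places you are more careful than the paper:
\begin{itemize}
\item You handle the product/wedge identification.
\item You say why $\res_L\Theta$ itself is an equivalence. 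The reason is that $\widetilde{H}^{k\rho_{C_2}}_{C_2}(S^0;\uFz)=0$ for $k\neq 0$, so on a pure $C_2$-space $\rho$-graded lifts are unique, and module maps between finite-type wedges of $S^{m\rho_{C_2}}\wedge H\uFz$ are equivalences iff they are so underlying.
\item You use graded Nakayama for the augmentation ideal $R_{>0}$, which is the right version. The paper's reduction modulo the four $\Fz$-rational maximal ideals is not the leading-term map for the non-homogeneous ones.
\end{itemize}

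The step you leave open (``I expect to argue\ldots'') is the one the paper simply asserts. It needs neither $\sigma'$ nor Proposition~\ref{prop:surjective map s0 to ep}.
\begin{itemize}
\item Decompose $\kappa^\bigstar(\widetilde{x})\in\bigoplus_k\widetilde{H}^{n\rho_{\K}-k}_{\K}(S^0;\uFz)\otimes\widetilde{H}^k(X^{\K})$. Summands with $k<n$ vanish. Those with $k>n$ land in $R_{k-n}\otimes\widetilde{H}^k(X^{\K})$ after inverting the $a_V$, hence in the augmentation ideal. So only $k=n$ matters.
\item $\widetilde{H}^{n\bar{\rho}_{\K}}_{\K}(S^0;\uFz)\cong\widetilde{H}_0^{\K}(S^{n\bar{\rho}_{\K}};\uFz)\cong\Fz\{a_{\bar{\rho}_{\K}}^n\}$, since the only $0$-cell is fixed and transfers from proper subgroups vanish in $\uFz$. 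Its generator maps to $1\in R_0$, and under $j_1^\bigstar$ to $(u_{\alpha_0}u_{\alpha_1}u_\beta)^np_0^n\neq 0$. So $j_1^\bigstar$ is injective in the one relevant degree.
\item $j_1^\bigstar\kappa^\bigstar(\widetilde{x})=r_1\sigma\bigl((u_{\alpha_0}u_{\alpha_1}u_\beta)^nx\bigr)$, and each $\kappa_{i,j}(x)$ lies in $\widetilde{H}^{n+3i+j}(X^{\K})$ with $3i+j>0$. Hence the $k=n$ summand is exactly $a_{\bar{\rho}_{\K}}^n\otimes\kappa(x)$.
\end{itemize}
This uses the particular lift built in the proof of Lemma~\ref{llifting}, whose image over $E\K_+\wedge X$ is $\sigma\bigl((u_{\alpha_0}u_{\alpha_1}u_\beta)^nx\bigr)$. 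The property $\rho_X(\widetilde{x})=(u_{\alpha_0}u_{\alpha_1}u_\beta)^nx$ in the lemma's statement is not enough on its own before purity is known.

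Finally, the Smith inequality by itself does not force $\widetilde{H}^i(X)=0$ for $4\nmid i$. Take this vanishing from the definition, as Section~5 does, or from the equality case. Equality holds because $\sigma$ makes $H^*_{\K}(X)$ free over $H^*(B\K)$.
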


\begin{proof}
By Lemma \ref{llifting}, for any class \(x \in \widetilde{H}^{4\ast}(X;\Fz)\) there exists a lift \(\tilde{x} \in \widetilde{H}^{\ast \rho_{\K}}(X;\uFz)\). Since \(\widetilde{H}^{\ast}(X;\Fz)\) is finitely generated -- say by a set \(\{x_i\}_{i\in F}\) -- we obtain a map
\[
f \colon X \longrightarrow \bigvee_{i\in F} S^{n_i\rho_{\K}} \wedge H\uFz
\]
in \(\mathrm{Sp}^{\K}\), where the restriction of \(f\) to each summand \(S^{n_i\rho_{\K}} \wedge H\uFz\) represents the class \(\tilde{x}_i\). This induces an \(H\uFz\)-module map
\[
\hat{f} \colon X \wedge H\uFz \longrightarrow \bigvee_{i\in F} S^{n_i\rho_{\K}} \wedge H\uFz.
\]

To show that \(\hat{f}\) is an equivalence of \(H\uFz\)-modules, it suffices to verify that the induced map \(\Phi^{L}(\hat{f})\) is an equivalence in the corresponding geometric fixed-point category. By Lemma \ref{lrest}, the restriction \(\mathrm{res}^{\K}_{L}(X)\) is a \(C_2\)-conjugation space for any order two subgroup $L$. Consequently, it is enough to check that \(\Phi^{\K}(\hat{f})\) is an equivalence. Note that the geometric fixed point functor is monoidal and on spaces it only computes the fixed point, we derive that 
\begin{myeq}\label{eqmod}
    \Phi^{\K}(\hat{f}): X^{\K}\wedge H\Fz \wedge_{H\Fz}\Phi^{\K}(H\uFz)\to \bigvee_{i\in F} S^{n_i} \wedge H\Fz \wedge_{H\Fz}\Phi^{\K}(H\uFz).
\end{myeq}

By Lemma \ref{lgeomfixed}, we have
\[
\pi_\ast\bigl(\Phi^{\K}(H\uFz)\bigr)
\cong
\frac{\Fz[x_{\alpha_0},x_{\alpha_1},x_{\beta}]}
{\bigl(x_{\alpha_0}x_{\alpha_1}+x_{\alpha_0}x_{\beta}+x_{\alpha_1}x_{\beta}\bigr)}.
\]
Under this identification, the degree \(1\) class \(x_V\) is represented by \(u_V/a_V\) for each
\(V \in \{\alpha_0,\alpha_1,\beta\}\). The maximal ideals of this ring are
\[
(x_{\alpha_0},x_{\alpha_1},x_{\beta}), \quad
(x_{\alpha_0}-1,x_{\alpha_1},x_{\beta}), \quad
(x_{\alpha_0},x_{\alpha_1}-1,x_{\beta}), \quad
(x_{\alpha_0},x_{\alpha_1},x_{\beta}-1).
\]
Let us denote the ring \(\pi_\ast\bigl(\Phi^{\K}(H\uFz)\bigr)\) by \(R\).

On homotopy groups, the map \(\Phi^{\K}(\hat{f})\) in \eqref{eqmod} induces
\[
\Phi^{\K}(\hat{f})_\ast \colon 
\widetilde{H}_\ast(X^{\K}; \Fz)\otimes_{\Fz} R 
\longrightarrow 
\bigoplus_{i\in F} \widetilde{H}_{\ast}(S^{n_i})\otimes_{\Fz} R.
\]
For any maximal ideal \(\mathfrak{m} \subset R\), consider the induced map
\begin{myeq}\label{inducedmap}
    \Phi^{\K}(\hat{f})_\ast \otimes_R R/\mathfrak{m} \colon 
\widetilde{H}_\ast(X^{\K}; \Fz) 
\longrightarrow 
\bigoplus_{i\in F} \widetilde{H}_{\ast}(S^{n_i}).
\end{myeq}

To prove that the map
\(\Phi^{\K}(\hat{f})_\ast \otimes_R R/\mathfrak{m}\) is an isomorphism,
we first observe that the canonical map of spectra
\(H\uFz \to \widetilde{E\cP}\wedge H\uFz\) induces, after applying \(\Phi^{\K}\),
a map of ring spectra
\[
H\Fz \longrightarrow \Phi^{\K}(H\uFz),
\]
since \(\Phi^{\K}\) is symmetric monoidal. In particular,
\(\Phi^{\K}(H\uFz)\) is naturally an \(H\Fz\)-module. It follows that
\(\Phi^{\K}(H\uFz)\) splits as a wedge of suspensions of \(H\Fz\),
namely
\begin{myeq}\label{eqsplitting}
\Phi^{\K}(H\uFz)
\;\simeq\;
(\bigvee_{i,j\ge 0}\Sigma^{i+j}H\Fz)
\;\vee\;
(\bigvee_{\substack{m\ge 1\\ k\ge 0}}\Sigma^{m+k}H\Fz),
\end{myeq}
corresponding to the monomial basis
\(
\{x_{\alpha_0}^i x_\beta^j \mid i,j\ge 0\}
\cup
\{x_{\alpha_1}^m x_\beta^k \mid m\ge 1,\ k\ge 0\}
\)
of \(R=\pi_\ast(\Phi^{\K}(H\uFz))\).

Now, for a class \(\widetilde{x_i}\in H^{n_i\rho_{\K}}_{\K}(X;\uFz)\), 
its geometric fixed point 
\[
\Phi^{\K}(\widetilde{x_i})
\in [X^{\K}_+\wedge S^{-n_i},\, \Phi^{\K}(H\uFz)]
\]
identifies, via \eqref{eqsplitting}, with an element of
\(
H^{n_i}(X^{\K};\Fz)\otimes_{\Fz} R,
\) which also identifies with the image of \(\kappa^\bigstar(\widetilde{x_i})\) under
\(\mathrm{id}\otimes \gamma\), where
\(
\gamma \colon \pi_{\ast}^{\K}(H\uFz)\longrightarrow \pi_{\ast}(\Phi^{\K}(H\uFz))=R.
\)

Thus, the image of $\Phi^{\K}(\widetilde x_i)$ in degree $n_i$ is exactly \(
\kappa(x_i)\in \widetilde H^{n_i}(X^{\K};\Fz).\) Therefore, the dual map of \eqref{inducedmap}
\begin{myeq}\label{eqdual}
    \mathrm{Hom}_{\Fz}(\Phi^{\K}(\hat{f})_\ast \otimes_R R/\mathfrak{m} , \Fz) \colon 
\bigoplus_{i\in F} \widetilde{H}^{\ast}(S^{n_i})  
\longrightarrow \widetilde{H}^\ast(X^{\K}; \Fz)
 \end{myeq}
sends the generator of
\(
\widetilde H^{n_i}(S^{n_i};\Fz)
\)
to $\kappa(x_i)$. Since $\kappa\colon \widetilde H^{4*}(X;\Fz)\to \widetilde H^*(X^{\K};\Fz)$ is an isomorphism and $\{x_i\}_{i\in F}$ is a basis, the set
\(
\{\kappa(x_i)\}_{i\in F}
\)
is a basis of $\widetilde H^*(X^{\K};\Fz)$. It follows that the map in \eqref{eqdual} is an isomorphism, and hence $\Phi^{\K}(\hat{f})_\ast \otimes_R R/\mathfrak{m}$ itself is an isomorphism. Thus by an application of Nakayama's lemma (see \cite[Proposition 2.8]{AM69} ) it follows that \(\Phi^{\K}(\hat{f})_\ast\) is itself an isomorphism.
\end{proof}

\end{mysubsect}

\sect{Maximal and Galois-maximal spaces}

The notions of maximal and Galois-maximal spaces arise naturally in real algebraic geometry, where a real algebraic variety $V$ is studied through its complex locus $V(\mathbb{C})$ endowed with the involution given by complex conjugation. The idea of maximality can be traced back to Harnack, while Galois-maximal spaces were later introduced by Krasnov as a refinement that encodes how the $C_2$-action governs the cohomology of the real locus $V(\mathbb{R})$. These concepts play a fundamental role in classification problems, revealing deep connections between topology and algebraic geometry.

From a topological viewpoint, maximality is often formulated in terms of equivariant cohomology. In particular, via the Borel construction, maximality corresponds to a freeness condition over $H^*(BC_2;\Fz)$. Finer structure is detected by $RO(C_2)$-graded equivariant cohomology, whose computational accessibility has significantly improved in recent years. In this section, we investigate these notions for $\K$-spaces.

\medskip

For a $\K$-space $X$, the classical Smith--Thom inequality asserts that
\[
\sum_{i=0}^{\dim X^{\K}} \dim_{\Fz} H^i (X^{\K}; \Fz)
\;\le\;
\sum_{i=0}^{\dim X} \dim_{\Fz} H^i (X; \Fz).
\]
We say that $X$ is $\K$-\emph{maximal} if equality holds.

\medskip

The $\K$-action further endows $H^*(X;\Fz)$ with the structure of an $\Fz[\K]$-module, leading to the refined inequality
\[
\sum_{i=0}^{\dim X^{\K}} \dim_{\Fz} H^i (X^{\K}; \Fz)
\;\le\;
\frac{1}{2}\sum_{i=0}^{\dim X} \dim_{\Fz} H^1(\K; H^i(X; \Fz)).
\]
We say that $X$ is \emph{$\K$-Galois-maximal} if equality holds. The factor $\tfrac{1}{2}$ arises from the fact that for any trivial $\K$-module $M$, one has
\(
\dim_{\Fz} H^1(\K; M) = 2\,\dim_{\Fz} M.
\)

\medskip

Our goal in this section is to clarify the relationship between conjugation spaces and these maximality conditions. In particular, we establish the following result.

\begin{proposition}
Let $X$ be a $\K$-conjugation space. Then $X$ is $\K$-maximal.
\end{proposition}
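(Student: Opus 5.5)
The plan is to show equality in the Smith--Thom inequality by showing that the degree-quartering isomorphism $\kappa$ forces the total Betti numbers of $X$ and $X^{\K}$ (mod $2$) to agree. First, the odd-degree and non-multiple-of-four cohomology of $X$ has to vanish. Then $\kappa$ identifies $H^{4*}(X)=H^*(X)$ with $H^*(X^{\K})$ additively, and equality follows immediately.

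\textbf{Step 1.} Reduce to showing $H^m(X;\Fz)=0$ for $4\nmid m$. This is where Lemma \ref{lrest} enters. For any order two subgroup $L\le\K$, $\mathrm{res}_L X$ is a $C_2$-conjugation space. By \cite{HHP05}, a $C_2$-conjugation space has cohomology concentrated in even degrees (the $H$-frame is an isomorphism $H^{2*}(X)\cong H^*(X^L)$, and by definition conjugation spaces have $H^{odd}(X)=0$). Applying this again to $X^{L}$ with the residual involution $\tau_{23}$, whose conjugation structure is given by the frame $\kappa_{23}$ of Lemma \ref{lrest}, gives the following. $H^*(X^L)$ is concentrated in even degrees, so $H^{2k}(X)\cong H^k(X^L)$ vanishes for $k$ odd. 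Hence $H^*(X)$ is concentrated in degrees divisible by $4$.

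If one prefers to avoid the iterated argument, an alternative is to note that the definition of a $\mathcal Q$-frame in \cite{KKP25} already includes (or implies) $H^{m}(X)=0$ for $4\nmid m$. In either case, this is the main point to verify carefully. I expect it to be the only real obstacle, namely checking that the cohomology of $X$ outside degrees $4*$ indeed vanishes from the axioms used in the paper.

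\textbf{Step 2.} With this concentration, we have
\[
\sum_i \dim_{\Fz} H^i(X;\Fz)=\sum_n \dim_{\Fz}H^{4n}(X;\Fz)=\sum_n\dim_{\Fz}H^n(X^{\K};\Fz),
\]
using that $\kappa\colon H^{4*}(X)\to H^*(X^{\K})$ is an additive isomorphism that divides degrees by $4$. This is exactly equality in the Smith--Thom inequality, so $X$ is $\K$-maximal. Finiteness of the sums may be assumed (finite type, as in Theorem \ref{main1}), or the equality may be read as an equality of cardinals.

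As a cross-check, in the homologically pure situation of Theorem \ref{main1} the same equality follows from the splitting $X\wedge H\uFz\simeq\bigvee_i S^{n_i\rho_{\K}}\wedge H\uFz$. Taking underlying spectra gives $H^*(X)\cong\bigoplus_i \Fz[4n_i]$, and taking geometric fixed points gives $H^*(X^{\K})\cong\bigoplus_i\Fz[n_i]$. Both totals then equal $|F|$.
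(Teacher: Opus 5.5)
Your Step~2, combined with your ``alternative'' for Step~1, is exactly the paper's proof. The paper takes $H^i(X;\Fz)=0$ for $i\not\equiv 0 \pmod 4$ as part of the standard definition of a $\K$-conjugation space from \cite{KKP25}, even though that condition is not listed in the definition as written in Section~4.1. It then sums $\dim_{\Fz} H^{4n}(X;\Fz)=\dim_{\Fz} H^n(X^{\K};\Fz)$ via $\kappa$.

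You should discard your primary, iterated route to Step~1, because it is circular. In \cite{HHP05} the vanishing of odd cohomology is a separate axiom of a conjugation space, not a consequence of having an $H$-frame. Nothing in Lemma~\ref{lrest} asserts that $(X^{L},\tau_{23})$ is a conjugation space, so knowing that $\kappa_{23}$ is a frame does not give $H^{\mathrm{odd}}(X^{L})=0$. Moreover, $\kappa_1\colon H^{2*}(X)\to H^*(X^{L})$ is an isomorphism halving degrees, so $H^{2k+1}(X^{L})\cong H^{4k+2}(X)$. The claim $H^{\mathrm{odd}}(X^{L})=0$ is therefore literally equivalent to the vanishing you are trying to prove. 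Similarly, the proof of Lemma~\ref{lrest} only checks the frame $(\sigma_1,\kappa_1)$, so even $H^{\mathrm{odd}}(X)=0$ for $\mathrm{res}_L X$ is inherited from the definition rather than proved.

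If you want the vanishing as a theorem rather than an axiom, there is a non-circular argument for finite-dimensional $X$ of finite type. That is the setting in which the Smith--Thom inequality is proved anyway.
\begin{itemize}
\item In the paper's definition the section $\sigma$ is defined on all of $H^*(X)$. Hence $\rho\colon H^*_{\K}(X)\to H^*(X)$ is surjective, and Leray--Hirsch gives $H^*_{\K}(X)\cong H^*(B\K)\otimes H^*(X)$ as a free $H^*(B\K)$-module.
\item Let $S$ be the multiplicative set generated by the nonzero classes of $H^1(B\K)$. The localization theorem identifies $S^{-1}H^*_{\K}(X)$ with $S^{-1}H^*(B\K)\otimes H^*(X^{\K})$.
\item Comparing ranks of free modules yields $\dim H^*(X^{\K})=\dim H^*(X)$. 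This is maximality directly, without using $\kappa$.
\item Combining this with $\kappa$ then forces $H^i(X)=0$ for $4\nmid i$.
\end{itemize}
So the paper's route (and your alternative) buys brevity at the cost of an extra hypothesis. The localization route shows that hypothesis is automatic for finite-dimensional, finite-type $X$ once $\sigma$ is defined on all of $H^*(X)$.
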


\begin{proof}
By definition of a ${\K}$-conjugation space, there is an additive isomorphism
\begin{myeq}\label{eqkappa}
\kappa\colon H^{4n}(X;\Fz)\xrightarrow{\cong} H^n (X^{\K};\Fz).
\end{myeq}

for each $n\ge 0$.  Taking dimensions over $\Fz$, we obtain $\dim_{\Fz} H^n(X^{\K};\Fz)=\dim_{\Fz} H^{4n}(X;\Fz).$

Now, in the standard definition of a ${\K}$-conjugation space, one assumes that $H^i(X;\Fz)=0
\text{ whenever } i\not\equiv 0 \pmod 4.$
Therefore,
\[
\sum_{i\ge 0}\dim_{\Fz} H^i(X;\Fz)
=
\sum_{n\ge 0}\dim_{\Fz} H^{4n}(X;\Fz).
\]
Thus using \eqref{eqkappa}, we obtain
\[
\sum_{i\ge 0}\dim_{\Fz} H^i(X;\Fz)
=
\sum_{n\ge 0}\dim_{\Fz} H^n(X^{\K};\Fz)
=
\sum_{i\ge 0}\dim_{\Fz} H^i(X^{\K};\Fz).
\]
Hence $X$ is ${\K}$-maximal.
\end{proof}

\begin{proposition}
Let $X$ be a ${\K}$-conjugation space. Then $X$ is Galois-maximal. 
\end{proposition}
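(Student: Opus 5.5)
Since $X$ is $\K$-maximal by the preceding proposition, it suffices to show that $H^*(X;\Fz)$ is a trivial $\Fz[\K]$-module. Once this is known, the remark after the definition gives
\[
\dim_{\Fz} H^1\bigl(\K;H^i(X;\Fz)\bigr)=2\dim_{\Fz}H^i(X;\Fz)
\]
for every $i$. Summing over $i$ and dividing by $2$ turns the right-hand side of the refined inequality into $\sum_i \dim_{\Fz}H^i(X;\Fz)$. This equals $\sum_i\dim_{\Fz}H^i(X^{\K};\Fz)$ by $\K$-maximality, which is exactly $\K$-Galois-maximality.

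The main step is therefore the triviality of the $\K$-action on $H^*(X;\Fz)$, and I would get it from the section $\sigma$ of the $\mathcal{Q}$-frame. For $g\in\K$, the action of $g$ on $X$ induces an automorphism of the Borel construction $E\K\times_{\K}X$ covering conjugation by $g$ on $B\K$. Since $\K$ is abelian, this automorphism is homotopic to the identity. On $H^*_{\K}(X)$ it therefore induces the identity. Compatibility of the restriction $\rho\colon H^*_{\K}(X)\to H^*(X)$ with these maps shows that the image of $\rho$ consists of $\K$-invariant classes. In other words, $\rho(y)=g^*\rho(y)$; this is the standard fact that the image of Borel restriction lies in $H^*(X)^{\K}$.

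Since $\sigma$ is a section of $\rho$, the map $\rho$ is surjective. Hence every $x=\rho(\sigma(x))$ is fixed by each $g\in\K$, so $H^*(X;\Fz)$ is a trivial $\Fz[\K]$-module. (Alternatively, one could use Lemma \ref{lrest} and the classical $C_2$-statement for each order-two subgroup, but the direct argument is simpler.)

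I expect the only delicate point to be a careful justification that the image of $\rho$ is $\K$-invariant. The fiber inclusion $X\to E\K\times_{\K}X$ and its translate by $g$ differ by the path-connectedness of $E\K$, so the two inclusions are homotopic compatibly with $g$. This gives $g^*\circ\rho=\rho$. I would write this homotopy out explicitly.

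Finally, I would check the bookkeeping that all sums are finite. Since $X$ is of finite type and $H^*(X)$ vanishes outside degrees divisible by $4$, the total dimensions agree and are finite. This justifies the comparison of sums.
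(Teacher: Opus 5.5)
Your proof is correct. It reaches the key step, triviality of the $\mathcal{K}_4$-action on $H^*(X;\mathbb{F}_2)$, by a different route than the paper.

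The paper observes that $\mathcal{K}_4$ acts trivially on $H^*(X^{\mathcal{K}_4})$ and transports this to $H^{4*}(X)$ ``via the isomorphism $\kappa$.'' It then does the degreewise count $\dim H^n(X^{\mathcal{K}_4})=\dim H^{4n}(X)=\tfrac12\dim H^1(\mathcal{K}_4;H^{4n}(X))$ and sums. The transport step tacitly needs $\kappa$ to intertwine the actions. That follows from the naturality of $\kappa$, which the paper only mentions in a remark, applied to the self-maps $g\colon X\to X$. These maps are $\mathcal{K}_4$-equivariant because $\mathcal{K}_4$ is abelian, and they restrict to the identity on $X^{\mathcal{K}_4}$.

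You instead use only two facts: $\sigma$ makes the Borel restriction $\rho$ surjective, and the image of $\rho$ is invariant because the fibre inclusion and its $g$-translate are homotopic ($E\mathcal{K}_4$ is path-connected). This needs neither $\kappa$ nor its naturality. It also shows more generally that any $\mathcal{K}_4$-space with surjective Borel restriction has trivial action on mod $2$ cohomology, so Galois-maximality reduces to maximality. The cost is that your final count relies on the preceding proposition, and hence on $\kappa$ and the vanishing of $H^i(X)$ for $i\not\equiv 0 \pmod 4$. The paper does that count directly.

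A small point: finite type neither makes the total sums finite nor is assumed in the statement. You do not need it, since the degreewise equalities already give equality of the sums in $\mathbb{N}\cup\{\infty\}$.
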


\begin{proof}

Since the $\K$-action on the fixed-point space $X^{\K}$ is trivial, the induced action on $H^*(X^{\K};\Fz)$ is likewise trivial. Via the isomorphism $\kappa$, this implies that the $\K$-action on $H^{4*}(X;\Fz)$ is trivial.

Let $M$ be a finite-dimensional $\Fz$-vector space equipped with the trivial $\K$-action. A standard computation in group cohomology shows that
\[
\dim_{\Fz} H^1({\K};M)=2\,\dim_{\Fz} M.
\]

Applying this with $M = H^{4n}(X;\Fz)$ and using \eqref{eqkappa}, we obtain
\[
\dim_{\Fz} H^1\!\bigl(\K; H^{4n}(X;\Fz)\bigr)
=
2\, \dim_{\Fz} H^{4n}(X;\Fz).
\]

On the other hand, by the graded isomorphism \eqref{eqkappa}
\[
\dim_{\Fz} H^n(X^{\K};\Fz)=\dim_{\Fz} H^{4n}(X;\Fz).
\]

Combining these identities, we obtain
\[
\dim_{\Fz} H^n(X^{\K};\Fz)
=
\frac{1}{2}\dim_{\Fz} H^1\!\bigl({\K}; H^{4n}(X;\Fz)\bigr).
\]

Summing over all $n$ and using that $H^i(X;\Fz)=0$ unless $i\equiv 0\pmod{4}$, we may reindex $i=4n$ to obtain
\[
\sum_{i\ge 0}\dim_{\Fz} H^i(X^{\K};\Fz)
=
\frac{1}{2}\sum_{i\ge 0}\dim_{\Fz} H^1\!\bigl({\K}; H^i(X;\Fz)\bigr),
\]
which proves that $X$ is Galois-maximal.
\end{proof}

\end{document}